\DeclareMathOperator{\id}{id}
\DeclareMathOperator{\sdim}{sdim}
\DeclareMathOperator{\im}{im}
\newcommand{\cP}{\ensuremath{\mathcal P}}
\newcommand{\cO}{\ensuremath{\mathcal O}}
\newcommand{\cS}{\ensuremath{\mathcal{S}}}
\newcommand{\cF}{\ensuremath{\mathcal{F}}}
\newcommand{\cf}{\ensuremath{\mathcal{f}}}
\newcommand{\cg}{\ensuremath{\mathcal{g}}}
\newcommand{\ch}{\ensuremath{\mathcal{h}}}
\newcommand{\cid}{\ensuremath{\mathcal{id}}}
\author[F. Pasquotto]{Federica Pasquotto}
\author[T.O. Rot]{Thomas O. Rot}
\thanks{T.O. Rot is supported by NWO-NWA startimpuls - 400.17.608.}
\title{Degree theory for orbifolds}
\begin{document}
\subjclass[2010]{57R18,57R35, 57R45, 55M25}
\keywords{Orbifolds, differential topology, degree theory, group actions, regular values}

\maketitle

\begin{abstract}
In \cite{BB2012} Borzellino and Brunsden started to develop an elementary differential topology theory for orbifolds. In this paper we carry on their project by defining a mapping degree for proper maps between orbifolds, which counts preimages of regular values with appropriate weights. We show that the mapping degree satisfies the expected invariance properties, under the assumption that the domain does not have a codimension one singular stratum. We study properties of the mapping degree and compute the degree in some examples.
\end{abstract}

\section{Introduction}
An orbifold is a space that is locally homeomorphic to $\mR^n/G$, where a finite group $G$ acts linearly on $\mR^n$. It is therefore natural to expect that many properties of manifolds are shared by orbifolds, such as the existence of an elementary differential topology. In this paper we contribute to the project of Borzellino and Brunsden~\cite{BB2012} in this direction and we define a mapping degree for complete orbifold maps. 

More precisely, for a proper, smooth orbifold map $\cf:\cO \rightarrow \cP$ and a regular value $y\in P$ of $\cf$, we define the mod-$2$ degree of $\cf$ at $y$, $\deg_2(\cf;y)$ and, if $\cO$ and $\cP$ are oriented, the integer valued degree $\deg(\cf;y)$.
These degrees are defined by a weighted count of the preimages of a given regular value: this takes into account the possibly non-trivial isotropy of the points involved. 

Our first result in Section~\ref{sec:degree} is that the mod-$2$ degree $\deg_2(\cf;y)$ is independent of the choice of regular value $y\in \cP$ and of the proper homotopy class of $\cf$, provided $\cP$ is connected and $\cO$ does not contain a codimension-$1$ singular stratum. We show that the condition on the codimension-$1$ stratum is necessary by constructing  in Section~\ref{sec:examples} an orbifold map whose degree depends on the regular value chosen. If the orbifolds $\cP$ and $\cO$ are oriented, this condition is always satisfied, since orientable orbifolds never have codimension-$1$ singular points. It follows that in the oriented case, if $\cP$ is connected, then the integer degree $\deg(\cf;y)$ is well defined and always independent of both the choice of regular value $y\in \cP$ and the proper homotopy class of $\cf$. 

In Section~\ref{sec:degree} we also prove some properties of the degree, namely that non-zero degree implies that the underlying map of an orbifold map is surjective. We also show that the degree is multiplicative under compositions of maps. 

Orbifolds frequently arise as quotients $M//G$, where a compact Lie group $G$ acts effectively on a manifold $M$ with finite stabilizers. These quotients are also a natural source of orbifold mappings. We show in Section~\ref{sec:quotient} that an equivariant map between such manifolds with group actions induces a complete orbifold map. We use this to construct orbifold mappings between weighted projective spaces, and we calculate their degree in Section~\ref{sec:examples}.

\section{Orbifolds and orbifold maps}
\label{sec:definitions}
For us an \emph{(effective) orbifold} $\mathcal{O}$ is a topological space $O$ together with an atlas of orbifold charts around each point of $O$. An \emph{orbifold chart} around a point $x\in O$ consists of an open set $\tilde{U}_x\subset \mathbb{R}^n$ together with an effective action of \emph{the isotropy group}, a finite group $\Gamma_x$, fixing $0\in \mathbb{R}^n$, and a homeomorphism $\phi_x$ from $\tilde{U}_x/\Gamma_x$ onto an open neighborhood of $U_x$ of $x$ such that $\phi_x(0)=x$.
Two charts $\tilde{U}_x$ and $\tilde{U}_y$ with $U_x\cap U_y \not=\emptyset$ need to satisfy the following compatibility condition in the atlas: there exist an orbifold chart $\tilde U_z$, injective group homomorphisms $\rho_{zx}:\Gamma_z\rightarrow \Gamma_x$ and $\rho_{zy}:\Gamma_z\rightarrow \Gamma_y$, and embeddings $i_{zx}: \tilde{U}_z\rightarrow \tilde{U}_x$ and $i_{zy}: \tilde{U}_z\rightarrow \tilde{U}_y$, such that 
$i_{zx}(\gamma\tilde{w})=\rho_{zx}(\gamma)(i_{zx}(\tilde{w}))$ and $i_{zy}(\gamma\tilde{w})=\rho_{zy}(\gamma)(i_{zy}(\tilde{w}))$ for all $\gamma\in \Gamma_z$ and $\tilde{w}\in\tilde{U}_z$. 

Just as in the manifold case, every orbifold atlas lies in a unique maximal atlas and we will always assume our orbifolds to be equipped with a maximal atlas. The Bochner-Cartan linearization theorem shows that we may always choose charts in which the groups $\Gamma_x$ act linearly on $\tilde U_x=\mR^n$, i.e., the chart is a representation of $\Gamma_x$. 

The \emph{singular set} $\Sigma$ of $\mathcal{O}$ consists of the points with a non-trivial isotropy group. 
Let $x\in \Sigma$ and $\tilde x$ the lift of $x$ to a chart $\tilde U_x$ centered at $x$. Then $\Gamma_x$ acts on $T_{\tilde x} \tilde U_x$ and fixes a subspace $T_{\tilde x}\tilde U_x^{\Gamma_x}$, cf. \cite{D2015}. The \emph{singular dimension} of $x$ is defined to be $\sdim(x)=\dim T_{\tilde x}\tilde U_x^{\Gamma_x}$ and does not depend on the chosen chart. The singular set is the union $\Sigma=\bigcup_{i= 0}^{n-1} \Sigma_i$ of \emph{singular strata} $\Sigma_i$, where $\Sigma_i=\{x\in \Sigma\,\vert\,\sdim(x)=i\}$. Each stratum $\Sigma_i$ can be further decomposed: the connected components of $\Sigma_i$ all have a well defined isotropy group up to isomorphism. We will not use this decomposition. 

  It can be shown that each stratum $\Sigma_i$ is a boundary-less manifold of dimension $i$, cf. \cite[Page 74 and onwards]{DThesis}, whose tangent spaces are modeled on $T_{\tilde x} \tilde U_x^{\Gamma_x}$. We will refer to points in $\Sigma_{\dim \cO}$ as \emph{smooth} points, and points in $\Sigma_i$ with $i<\dim \cO$ as \emph{non-smooth points}.
  
We will also consider smooth orbifolds with boundary, by allowing the orbifold charts $\tilde{U}_x$ in the definition of orbifold to be open subsets of the closed upper half-space  $[0,\infty)\times \mathbb{R}^{n-1}$. 

An orbifold is \emph{locally orientable} if the groups $\Gamma_x$ act on $\tilde U_x$ by orientation preserving diffeomorphisms. An orbifold is \emph{orientable} if in addition the embeddings $i_{xy}:\tilde U_x\rightarrow \tilde U_y$ are orientation preserving. An \emph{orientation} is then a consistent choice of orientation of the charts $\tilde U_x$. We will need the following result on the structure of the singular set.
  
  \begin{proposition}
    \label{prop:structure}
    Let $\cO$ be an orbifold. Then $\Sigma_{\dim \cO}$ is an open and dense subset which is a manifold. The union $\Sigma_{\dim \cO}\cup \Sigma_{\dim \cO-1}\subset O$ is a manifold with boundary $\partial(\Sigma_{\dim \cO}\cup \Sigma_{\dim \cO-1})=\Sigma_{\dim \cO-1}$. If $\cO$ is connected, then $\Sigma_{\dim \cO}$ is connected. If $\cO$ is orientable, then $\Sigma_{\dim \cO-1}=\emptyset$.
  \end{proposition}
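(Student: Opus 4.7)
The plan is to prove the four assertions by local analysis in linear orbifold charts $\tilde U_x\subset\mathbb R^n$ (afforded by Bochner--Cartan), using the identification of the singular dimension of a point $y$ with $\dim\mathrm{Fix}(\Gamma_{\tilde y})$, where $\tilde y$ is a lift of $y$, $\Gamma_{\tilde y}\subset\Gamma_x$ is its stabilizer, and $\mathrm{Fix}(\Gamma_{\tilde y})\subset T_{\tilde y}\tilde U_x$ is the fixed subspace. The first assertion is then direct: the lift of $\Sigma_{\dim\cO}$ to $\tilde U_x$ is $\tilde U_x\setminus\bigcup_{\gamma\neq e}\mathrm{Fix}(\gamma)$, a complement of finitely many proper linear subspaces, hence open and dense in $\tilde U_x$. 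It descends to an open dense subset of $U_x$ on which the $\Gamma_x$-action is free, so the quotient map is a finite smooth covering and transfers a smooth manifold structure to $\Sigma_{\dim\cO}$.

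For the manifold-with-boundary structure of $\Sigma_{\dim\cO}\cup\Sigma_{\dim\cO-1}$, I would analyze a point $y\in\Sigma_{\dim\cO-1}$. There $V:=\mathrm{Fix}(\Gamma_y)$ is a hyperplane. Every $\gamma\in\Gamma_y$ fixes $V$ pointwise and, being linear of finite order over $\mathbb R$, has a single remaining eigenvalue equal to $\pm 1$; since $V$ is the whole common fixed set of $\Gamma_y$, no non-identity element can have that eigenvalue equal to $+1$. Hence $\Gamma_y=\langle\sigma\rangle\cong\mathbb Z/2$ with $\sigma$ a reflection across $V$, and $\tilde U_y/\langle\sigma\rangle$ is homeomorphic to the closed half-space $[0,\infty)\times V$ with boundary $\Sigma_{\dim\cO-1}\cap U_y$. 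Combined with the previous paragraph this yields a half-space atlas for $\Sigma_{\dim\cO}\cup\Sigma_{\dim\cO-1}$.

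For connectedness, I would first show that $\Sigma_{\dim\cO}\cup\Sigma_{\dim\cO-1}$ is connected. Its complement consists of points whose stabilizer fixes a subspace of codimension at least two, and in a chart the lifts of such points lie either in $\mathrm{Fix}(\gamma)$ for some non-reflection $\gamma$, or in an intersection $\mathrm{Fix}(\sigma_1)\cap\mathrm{Fix}(\sigma_2)$ of two distinct reflection hyperplanes, so they form a finite union of linear subspaces of real codimension at least two. Removing such a set from a connected open subset of $\mathbb R^n$ preserves path-connectedness, and hence the same holds after passing to the quotient. A patching argument along a path in $O$ between two given points of $\Sigma_{\dim\cO}\cup\Sigma_{\dim\cO-1}$ then shows that this union is globally path-connected; its interior $\Sigma_{\dim\cO}$ is dense in it and inherits connectedness. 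Finally, if $\cO$ were orientable with $y\in\Sigma_{\dim\cO-1}$, the reflection $\sigma\in\Gamma_y$ from the previous paragraph would be orientation-reversing, contradicting local orientability, so $\Sigma_{\dim\cO-1}=\emptyset$.

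The most delicate point is the patching in the connectedness argument, because the local codimension-$\geq 2$ statement does not by itself yield a global path between the two chosen endpoints. I would handle this by covering the image of a continuous path $\gamma\colon[0,1]\to O$ between the endpoints with finitely many charts, choosing a subdivision $0=t_0<\cdots<t_N=1$ with each $\gamma([t_i,t_{i+1}])$ contained in a single chart $U_{x_i}$, perturbing the interior sample points $\gamma(t_i)$ into $\Sigma_{\dim\cO}$ using density, and finally connecting consecutive perturbed samples within their common chart via the local path-connectedness of $(\Sigma_{\dim\cO}\cup\Sigma_{\dim\cO-1})\cap U_{x_i}$.
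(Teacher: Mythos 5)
Your proposal is correct and follows essentially the same route as the paper: density of the top stratum from effectiveness, identification of a codimension-one singular point's isotropy as $\mathbb{Z}/2$ acting by a reflection (you via an eigenvalue/determinant argument, the paper via a $\Gamma_x$-invariant inner product and effectiveness on the line $V^\perp$ — both fine), connectedness by removing codimension-$\geq 2$ sets, and orientation-reversal of reflections. Your fleshed-out patching argument for connectedness is a welcome expansion of a step the paper leaves terse.
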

  \begin{proof}
    Clearly $\Sigma_{\dim \cO}$ is a manifold and it is open in $O$. Moreover, since we demand that the isotropy groups $\Gamma_x$ act effectively, it is also dense. Now suppose that $\Sigma_{\dim \cO-1}\not=\emptyset$. Let $x\in \Sigma_{\dim \cO-1}$. Let $\tilde U_x$ be an orbifold chart centered at $x$ on which the finite group $\Gamma_x$ acts. Without loss of generality, we can assume $\tilde U_x\cong \mR^n$ and that the action is linear. The action then fixes a hyperplane $V\subset \mR^n$. Choose a $\Gamma_x$ invariant inner product. Then $\Gamma_x$ also leaves the line $V^\perp=\{v\in \mR^n\,\vert\,\langle v,w\rangle=0,\text{ for all } w\in V\}$ invariant and it must act effectively on $V^\perp$. But this is only possible if $\Gamma_x=\mZ_2$ and the action is by a reflection. The quotient of $\mR^n$ by this action is thus isomorphic to $(V^\perp/{\mathbb{Z}_2})\times V\cong [0,\infty)\times \mR^{n-1}$. We see that $\Sigma_{\dim \cO}\cup \Sigma_{\dim \cO-1}$ is a manifold with boundary $\Sigma_{\dim \cO-1}$. Notice that $\Sigma_{\dim\cO} \cup \Sigma_{\dim \cO-1}$ is obtained from $\cO$ by removing sets of codimension $2$ and higher. Thus $\Sigma_{\dim\cO} \cup \Sigma_{\dim \cO-1}$ is connected if $\cO$ is connected. A connected manifold with boundary has a connected interior, hence $\Sigma_{\dim \cO}$ is connected. A reflection is not orientation preserving, so if $\cO$ is orientable, then $\Sigma_{\dim \cO-1}=\emptyset$. 
  \end{proof}

A \emph{full suborbifold} $\cS$ of the orbifold $\cO$ consists of:
  \begin{itemize}
  \item a subspace $S\subset O$; 
  \item for each $x\in S$ and neighborhood $W$ of $x$ in $O$, a linear orbifold chart $(\tilde U_x,\Gamma_x,\phi_x)$, with $U_x\subset W$, and a $\Gamma_x$-invariant linear subspace $\tilde V_x\subset \tilde U_x$, such that $(\tilde V_x,\Gamma_x/\Omega_x,\phi_x\bigr\vert_{\tilde V_x})$ is an orbifold chart for $\cS$ (where $\Omega_x=\{\gamma\,:\, \gamma\bigr\vert_{\tilde V_x}=\id\}$) and $V_x:=\phi_x(\tilde V_x/\Gamma_x)=U_x\cap S.$
\end{itemize}
    Of course all maps in the above definition are required to be smooth. We refer to \cite{BB2012,BB2015} for a general definition of suborbifold and a discussion of the properties of full suborbifolds.

Given two orbifolds $\mathcal{O}$ and $\mathcal{P}$, a \emph{smooth complete orbifold map} between $\mathcal{O}$ and $\mathcal{P}$ is a triple $\cf=(f,\{\tilde f_x\},\{\Theta_{x}\})$ consisting of the following data:
\begin{itemize}
\item a continuous map $f:O\rightarrow P$ between the underlying topological spaces; 
\item for each $x\in O$, a group homomorphism $\Theta_{x}:\Gamma_x\rightarrow \Gamma_{f(x)}$; 
\item for each $x\in O$, given orbifold charts $(\tilde{U}_x,\Gamma_x)$ and $(\tilde{U}_{f(x)},\Gamma_{f(x)})$ around $x$ and $f(x)$, respectively, a smooth lift $\tilde{f}_x:\tilde{U}_x\rightarrow \tilde{U}_{f(x)}$ which is $\Theta_{x}$-equivariant, i.e. $\tilde f_x(\gamma y)=\Theta_x(\gamma)\tilde f_x(y)$ for all $\gamma\in \Gamma_x$.
  
\end{itemize}
We identify two orbifold maps $\cf=(f,\{\tilde f_x\},\{\Theta^f_x\})$ and $\cg=(g,\{\tilde g_x\},\{\Theta^g_x\})$ if for every $x\in O$ there exists an orbifold chart $(\tilde{U}_x,\Gamma_x)$ such that $\tilde{f}_x\vert_{\tilde{U}_x}=\tilde{g}_x\vert_{\tilde{U}_x}$ and $\Theta^f_x=\Theta^g_{x}$. 

In particular, the maps $f$ and $g$ of the underlying topological spaces coincide for equivalent maps. We will often drop the adjective complete, and speak of a smooth (orbifold) map $\cf$.

A smooth orbifold map $\cf:\cO\rightarrow \cP$ is an \emph{orbifold diffeomorphism} if there exists a smooth orbifold map $\cf^{-1}:\cP\rightarrow \cO$ such that $\cf^{-1}\cf=\cid_\cO$ and $\cf \cf^{-1}=\cid_\cP.$ The identity $\cid$ is the smooth orbifold map defined by a triple where all  maps involved are identities.
A smooth orbifold map $\cf:\cO\rightarrow \cP$ is \emph{proper} if the underlying map $f:O\rightarrow P$ is proper: i.e. preimages of compact sets are compact. If $\cO$ is compact, then $\cf$ is automatically proper. 

In order to define homotopies of orbifold maps, we need to consider the product orbifold structure on $\mathcal{O}\times [0,1]$ (\cite{BB2008}): its singular set consists of points of the form $(x,t)$, with $x\in \Sigma$ and $t\in [0,1]$, and for all such points the isotropy group $\Gamma_{(x,t)}$ is isomorphic to $\Gamma_x$. The time $t$ inclusion $\mathcal{i}_t:\cO\rightarrow \cO\times [0,1]$ is a smooth orbifold map, where all the defining data are the obvious inclusions.  

Two smooth orbifold maps $\cf,\,\cg: \mathcal{O}\rightarrow \mathcal{P}$ are called \emph{smoothly homotopic} if there exists a smooth orbifold map $\cF:\, \mathcal{O}\times [0,1]\rightarrow \mathcal{P}$ such that 
\[
\cF\mathcal{i}_0=\cf\quad \textrm{and}\quad \cF\mathcal{i}_1=\cg. 
\]
The homotopy is said to be proper if the underlying map $F$ is proper (this assumption is stronger than the assumption that $\cF\mathcal i_t$ is proper for all $t$, cf.~\cite{rot}).

The statements below are the orbifold version of the well-known Sard's theorem and regular preimage theorem for manifolds. 
Given a smooth orbifold map $\cf:\cO\rightarrow \cP$, a point $x\in O$ is called \emph{regular} if the differential
\[
T_{\tilde x}\tilde{f}_x:T_{\tilde{x}}\tilde{U}_x\rightarrow T_{\tilde{f}(\tilde{x})}\tilde{U}_y
\]
is a surjective linear map.  Here $\tilde{x}$ denotes the lift of $x$ to $\tilde{U}_x$. A point $y\in P$ is called a \emph{regular value} if all $x\in f^{-1}(y)$ are \emph{regular points}. 
We refer the reader to \cite{BB2012} for proofs of the statements.

\begin{theorem}~\cite[Theorem 4.1]{BB2012}
Let $\cf:\cO\rightarrow \cP$ be a complete orbifold map. Then the set of regular values is is dense in $\cP$.
\end{theorem}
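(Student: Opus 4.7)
The idea is to reduce to the classical Euclidean Sard theorem applied inside orbifold charts. The rank of $T_{\tilde x}\tilde f_x$ is invariant under both the finite group actions and the embeddings $i_{zx}$ of the orbifold atlas, so the notion of regular point is independent of the particular chart and lift used to represent $\cf$ near $x$ (different lifts differ only by post-composition with an element of $\Gamma_{f(x)}$). Hence density of regular values can be checked one orbifold chart of $\cP$ at a time.

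Fix $y\in \cP$ and a linear chart $(\tilde U_y,\Gamma_y,\phi_y)$ around $y$, with $\tilde U_y\subset \mR^m$ a ball, and set $W=\phi_y(\tilde U_y/\Gamma_y)$; we look for regular values of $\cf$ in $W$. Since orbifolds are second countable, cover $f^{-1}(W)\subset O$ by countably many linear orbifold charts $\{(\tilde U_{x_i},\Gamma_{x_i},\phi_{x_i})\}_{i\in \mathbb{N}}$ that are small enough to satisfy $f(U_{x_i})\subset W$. Using the chart-change axiom and the $\Theta_{x_i}$-equivariance of $\tilde f_{x_i}$, the lift at $x_i$ supplied by $\cf$ can be composed with an appropriate chart embedding into $\tilde U_y$, producing a smooth Euclidean map $\tilde f_i:\tilde U_{x_i}\rightarrow \tilde U_y$ representing $\cf|_{U_{x_i}}$ in the fixed chart around $y$.

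By the classical Sard theorem, the critical value set $C_i\subset \tilde U_y$ of $\tilde f_i$ has Lebesgue measure zero; hence
\[
C \;:=\; \bigcup_{i\in\mathbb{N}}\bigcup_{\gamma\in\Gamma_y} \gamma\cdot C_i \;\subset\; \tilde U_y
\]
is measure zero and its complement is dense in $\tilde U_y$. I claim any $\tilde z\in \tilde U_y\setminus C$ projects to a regular value $z=\phi_y(\Gamma_y\cdot\tilde z)\in W$. Given $x\in f^{-1}(z)$, pick $i$ with $x\in U_{x_i}$ and a lift $\tilde x\in \tilde U_{x_i}$; then $\tilde f_i(\tilde x)\in \tilde U_y$ is a lift of $z$, so $\tilde f_i(\tilde x)=\gamma\tilde z$ for some $\gamma\in\Gamma_y$. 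If $T_{\tilde x}\tilde f_i$ failed to be surjective, then $\gamma\tilde z\in C_i$, i.e.\ $\tilde z\in \gamma^{-1}C_i\subset C$, contradicting the choice of $\tilde z$. Thus $T_{\tilde x}\tilde f_i$ is surjective, and by chart-independence of the rank, $x$ is a regular point of $\cf$. So $z$ is a regular value, proving density in $W$ and hence in $\cP$.

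The main technical obstacle is the chart bookkeeping: one has to verify that the orbifold-map data at each $x_i$ can genuinely be re-expressed as a single smooth Euclidean map into the fixed chart $\tilde U_y$, and that two different lifts of the same point $z\in W$ differ by the $\Gamma_y$-action (which is why taking the $\Gamma_y$-saturation in the definition of $C$ is both necessary and sufficient). Once this setup is in place, the measure-theoretic conclusion from the Euclidean Sard theorem is immediate.
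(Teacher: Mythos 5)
This theorem is quoted from \cite{BB2012} and the paper gives no proof of its own, deferring to that reference; your argument---chart-independence of the rank, a countable cover of $f^{-1}(W)$ by charts mapping into a fixed chart $\tilde U_y$, the Euclidean Sard theorem in each, and saturation of the critical values under the finite group $\Gamma_y$---is precisely the standard reduction used there, and it is correct. The only points left implicit (second countability of $O$, and that the rank of the lift in non-centered charts agrees with that of the lift $\tilde f_x$ in the definition of regular point) are exactly the bookkeeping you flag, and they go through by the chart-compatibility axioms.
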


\begin{remark}
Since a proper map is closed and the set of critical points is closed, the set of regular values of a proper map is open and dense. 
\end{remark}

\begin{theorem}~\cite[Theorems 4.2 and 4.3]{BB2012}
  \label{thm:preimage}
Let $\cO$ and $\cP$ be orbifolds. Let $\cf:\cO\rightarrow \cP$ be a smooth orbifold map and $y\in P$ a regular value. Then $\cf^{-1}(y)$ has the structure of a full suborbifold of dimension $\dim \cO-\dim \cP$. If $\cO$ has boundary and $y$ is also regular for $\cf\bigr\vert_{\partial \cO}$, then $\cf^{-1}(y)$ is a full suborbifold with boundary contained in $\partial \cO$. 
\end{theorem}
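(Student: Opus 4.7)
The plan is to work pointwise in $\cf^{-1}(y)$ and to construct, for each $x \in f^{-1}(y)$, a linear orbifold chart at $x$ in which the preimage appears as a $\Gamma_x$-invariant linear subspace; this directly matches the definition of full suborbifold given above. The argument is the equivariant upgrade of the standard regular preimage theorem, and the only genuinely new ingredient is an equivariant straightening of a smooth submanifold inside a linear orbifold chart.

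First I would fix $x \in f^{-1}(y)$ and pick linear orbifold charts $(\tilde U_x,\Gamma_x,\phi_x)$ and $(\tilde U_y,\Gamma_y,\phi_y)$ centered at $x$ and $y$, together with the $\Theta_x$-equivariant smooth lift $\tilde f_x\colon \tilde U_x \to \tilde U_y$. Since by hypothesis $T_0\tilde f_x$ is surjective, the classical manifold implicit function theorem guarantees that on a neighborhood of $0$, $\tilde V_x := \tilde f_x^{-1}(0)$ is a smooth submanifold of dimension $\dim\cO-\dim\cP$. To keep the neighborhood $\Gamma_x$-invariant I would intersect it with its $\Gamma_x$-translates, which is harmless because $\Gamma_x$ is finite. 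The submanifold $\tilde V_x$ is automatically $\Gamma_x$-invariant, because $\tilde f_x$ is $\Theta_x$-equivariant and the center $0 \in \tilde U_y$ is fixed by all of $\Gamma_y$.

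Next I would straighten $\tilde V_x$ into a linear subspace while preserving equivariance. Let $W = \ker T_0\tilde f_x$, which is a $\Gamma_x$-invariant linear subspace since $T_0\tilde f_x$ is $\Theta_x$-equivariant. Choose a $\Gamma_x$-invariant inner product on $\tilde U_x$ and let $W^\perp$ be the corresponding (invariant) orthogonal complement. Locally $\tilde V_x$ is the graph of a smooth map $\sigma\colon W \to W^\perp$ with $\sigma(0) = 0$ and $D\sigma(0) = 0$, and $\Gamma_x$-invariance of $\tilde V_x$ forces $\sigma(\gamma w) = \gamma\sigma(w)$ for every $\gamma \in \Gamma_x$. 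The map $\psi(w,w^\perp) := (w, w^\perp - \sigma(w))$ is then a $\Gamma_x$-equivariant local diffeomorphism sending $\tilde V_x$ onto $W$. Replacing the chart $\phi_x$ by $\phi_x\circ\psi^{-1}$ gives a linear orbifold chart at $x$ in which $\cf^{-1}(y)$ is represented by the $\Gamma_x$-invariant linear subspace $W$; setting $\Omega_x = \{\gamma\in\Gamma_x : \gamma|_W = \id\}$, the triple $(W,\Gamma_x/\Omega_x,\phi_x|_W)$ is then the required full suborbifold chart.

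The boundary statement is obtained in the same way: at a boundary point $x \in \partial \cO$ the chart $\tilde U_x$ is open in $[0,\infty)\times \mathbb{R}^{n-1}$, the $\Gamma_x$-action preserves $\partial \tilde U_x$, and the extra assumption that $y$ is regular also for $\cf|_{\partial\cO}$ guarantees that the restricted differential is surjective as well. The implicit function theorem for half-space domains then produces a smooth submanifold-with-boundary inside $\tilde U_x$ with boundary in $\partial \tilde U_x$, and the same equivariant straightening applies; the resulting boundary of $\cf^{-1}(y)$ sits inside $\partial\cO$ by construction. The main obstacle throughout is this equivariant straightening: one has to verify that each ingredient (the invariant inner product, the splitting $W \oplus W^\perp$, the graphing function $\sigma$, and hence $\psi$) is genuinely $\Gamma_x$-equivariant, and that the new chart really does lie in the maximal orbifold atlas. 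Once these checks are in place, everything else reduces to the familiar manifold arguments.
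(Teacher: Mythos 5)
This theorem is quoted from Borzellino--Brunsden (Theorems 4.2 and 4.3 of \cite{BB2012}); the paper gives no proof of its own and explicitly refers the reader there, so there is no in-paper argument to compare against. Your proof is the standard one and is correct: the equivariant implicit function theorem combined with the graph-straightening trick --- using a $\Gamma_x$-invariant inner product to split $\tilde U_x$ as $\ker T_0\tilde f_x\oplus(\ker T_0\tilde f_x)^\perp$, noting that $W=\ker T_0\tilde f_x$ is $\Gamma_x$-invariant because $T_0\tilde f_x$ is $\Theta_x$-equivariant, that invariance of $\tilde f_x^{-1}(0)$ forces $\sigma(\gamma w)=\gamma\sigma(w)$, and that the shear $\psi$ is therefore equivariant and compatible with the original chart --- is exactly how one produces the required linear suborbifold charts, and the dimension count follows from rank--nullity. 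The only points you leave implicit are routine: verifying that the pointwise charts $(W,\Gamma_x/\Omega_x,\phi_x|_W)$ are mutually compatible so that they assemble into an atlas for $\cf^{-1}(y)$ (this follows by restricting the ambient chart embeddings), and the half-space version of the straightening at boundary points.
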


  It was remarked in~\cite{BB2012} that there are representation theoretic obstructions for a point of a smooth orbifold map $\cf:\cO\rightarrow \cP$ to be regular. We need the following version of this principle. 

  \begin{proposition}
    \label{prop:singularity}
Let $\cf:\cO\rightarrow \cP$ be a smooth orbifold map. Let $y\in P$ be a regular value and a smooth point of $\cP$. Then $\cf^{-1}(y)$ is a full suborbifold and every $x\in f^{-1}(y)$ has singular dimension $\sdim_{\cO}(x)\geq \dim \cP$.
\end{proposition}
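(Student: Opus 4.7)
The first assertion is immediate from Theorem~\ref{thm:preimage}, since regular values always have full suborbifold preimages. The content of the proposition is the inequality $\sdim_{\cO}(x)\geq \dim \cP$, and the plan is to extract it from the representation-theoretic constraints forced on $\tilde f_x$ by the fact that $\Gamma_y$ is trivial.

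Fix $x\in f^{-1}(y)$ and pick a linear chart $(\tilde U_x,\Gamma_x)$ centered at $x$ and a chart $(\tilde U_y,\Gamma_y)$ centered at $y$. Since $y$ is a smooth point of $\cP$, the group $\Gamma_y$ is trivial, so the homomorphism $\Theta_x:\Gamma_x\to \Gamma_y$ in the data of $\cf$ is the trivial homomorphism. The equivariance condition $\tilde f_x(\gamma \tilde w)=\Theta_x(\gamma)\tilde f_x(\tilde w)$ therefore collapses to $\tilde f_x(\gamma \tilde w)=\tilde f_x(\tilde w)$ for every $\gamma\in\Gamma_x$; in other words, $\tilde f_x$ is $\Gamma_x$-invariant.

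Differentiating this invariance at the fixed lift $\tilde x\in \tilde U_x$ gives $T_{\tilde x}\tilde f_x\circ \gamma_{*}=T_{\tilde x}\tilde f_x$ for all $\gamma\in\Gamma_x$, where $\gamma_*$ is the induced linear action on $T_{\tilde x}\tilde U_x$. Averaging over the group, the projection
\[
\pi=\frac{1}{|\Gamma_x|}\sum_{\gamma\in \Gamma_x}\gamma_{*}\colon T_{\tilde x}\tilde U_x\longrightarrow T_{\tilde x}\tilde U_x^{\Gamma_x}
\]
satisfies $T_{\tilde x}\tilde f_x=T_{\tilde x}\tilde f_x\circ \pi$. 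Equivalently, with respect to any $\Gamma_x$-invariant inner product, $T_{\tilde x}\tilde f_x$ vanishes on the orthogonal complement of $T_{\tilde x}\tilde U_x^{\Gamma_x}$ and is therefore determined by its restriction to $T_{\tilde x}\tilde U_x^{\Gamma_x}$.

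Since $y$ is a regular value, $T_{\tilde x}\tilde f_x$ is surjective onto $T_{\tilde y}\tilde U_y$. Combined with the previous step, the restricted map $T_{\tilde x}\tilde f_x\bigr\vert_{T_{\tilde x}\tilde U_x^{\Gamma_x}}\colon T_{\tilde x}\tilde U_x^{\Gamma_x}\to T_{\tilde y}\tilde U_y$ is also surjective, which forces $\dim T_{\tilde x}\tilde U_x^{\Gamma_x}\geq \dim T_{\tilde y}\tilde U_y=\dim \cP$. By definition this is exactly $\sdim_{\cO}(x)\geq \dim \cP$. There is no serious obstacle in the argument; the only subtle point is the observation that triviality of $\Gamma_y$ upgrades the ordinary equivariance of $\tilde f_x$ to honest $\Gamma_x$-invariance, and everything else is linear algebra of representations of finite groups.
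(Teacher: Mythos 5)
Your proof is correct and follows essentially the same route as the paper's: both arguments rest on the observation that a smooth point $y$ has trivial isotropy, so $\Theta_x$ is trivial, the lift $\tilde f_x$ is $\Gamma_x$-invariant, and the differential $T_{\tilde x}\tilde f_x$ factors through the averaging projection onto $T_{\tilde x}\tilde U_x^{\Gamma_x}$. The paper packages this as the complementary idempotent $A_x=\mathrm{id}-\pi$ (whose image lies in $\ker T_{\tilde x}\tilde f_x$) together with rank--nullity, whereas you work with $\pi$ directly and conclude from surjectivity of the restriction; the linear algebra is identical.
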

\begin{proof}
  Let $\tilde U_x$ be a chart centered at $x$ and $\tilde x$ be the lift of $x$. Then $\Gamma_x$ acts on $T_{\tilde x} U_x$ by the differential. To avoid cumbersome notation we denote this action by left multiplication, i.e $\gamma \cdot \tilde v=T\gamma_{\tilde x} \tilde v$.  Following~\cite{BB2012} we define $K_x=\ker T_{\tilde x}\tilde f_x$ and $N_x=\ker \Theta_x$. Now define the linear operator $A_x$ by
  $$
  A_x \tilde v=\tilde v-\frac{1}{\vert N_x \vert}\sum_{\gamma \in N_x}\gamma\cdot \tilde v.
  $$
Here $|N_x|$ denotes the order of $N_x$. The operator has the following properties. If $\tilde v\in T_{\tilde x}\tilde{U}_x$ then $T_{\tilde{x}}\tilde f_x(\gamma\cdot \tilde v)=T_{\tilde{x}}\tilde f_x (\tilde v)$ for all $\gamma\in N_x$ hence $T_{\tilde{x}}\tilde f_x (A_x \tilde v)=0$, i.e. $\im A_x\subset K_x$. Moreover, the operator $A_x$ commutes with the action and $A_x^2=A_x$. Thus $A_x:T_{\tilde x}\tilde U_x\rightarrow K_x$ is an $N_x$ invariant linear projection (which might not be surjective).

  If $\tilde v\in \ker A_x$ then $\tilde v=\frac{1}{\vert N_x\vert}\sum_{\gamma\in N_x} \gamma\cdot \tilde v$ hence
\[
\delta \tilde v=\frac{1}{\vert N_x\vert}\sum_{\gamma}\delta\cdot (\gamma\cdot v)=\frac{1}{\vert N_x\vert}\sum_{\gamma}(\delta \gamma)\cdot v=\frac{1}{\vert N_x\vert}\sum_{\gamma'\in N_x}\gamma'\cdot v=\tilde v
\]
for all $\delta\in N_x$. So far we have not used the assumption that $y$ is a smooth point. But if that is the case, then $N_x$ must be the full isotropy group, i.e. $N_x=\Gamma_{x}^\cO$. It follows that $\sdim_\cO(x)\geq \dim \ker A_x$, and therefore $\sdim_\cO (x) \geq \dim \ker A_x=\dim \cO-\dim \im A_x\geq \dim \cO-\dim K_x$ by the rank-nullity Theorem. Since $y$ is regular, we have that $\dim K_x=\dim \cO-\dim \cP$, which implies that $\sdim_\cO(x)\geq \dim \cP$. 
\end{proof}

  \begin{corollary}
    Let $\cf:\cO\rightarrow \cP$ be a smooth orbifold map between orbifolds of the same dimension. Let $y\in P$ be a regular value and a smooth point of $\cP$. Then each $x\in \cf^{-1}(y)$ is smooth.
\end{corollary}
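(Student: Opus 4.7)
The plan is to apply Proposition~\ref{prop:singularity} directly, since its hypotheses are precisely what the corollary assumes, and then observe that the conclusion on singular dimension forces smoothness when the two orbifolds have the same dimension.

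In more detail, I would proceed as follows. Let $x\in \cf^{-1}(y)$. By hypothesis $y$ is a regular value and a smooth point of $\cP$, so Proposition~\ref{prop:singularity} applies and gives
\[
\sdim_{\cO}(x)\geq \dim \cP.
\]
Since $\dim \cO=\dim \cP$ by assumption, this yields $\sdim_{\cO}(x)\geq \dim \cO$. On the other hand, by definition $\sdim_{\cO}(x)=\dim T_{\tilde x}\tilde U_x^{\Gamma_x}\leq \dim \tilde U_x=\dim \cO$, so in fact $\sdim_{\cO}(x)=\dim \cO$.

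Finally, $\sdim_{\cO}(x)=\dim \cO$ means that the fixed subspace of the $\Gamma_x$-action on $T_{\tilde x}\tilde U_x$ is all of $T_{\tilde x}\tilde U_x$; since $\Gamma_x$ acts effectively and linearly on $\tilde U_x$, this forces $\Gamma_x$ to be trivial, i.e. $x\in \Sigma_{\dim \cO}$, which is the set of smooth points. There is no real obstacle here: the entire work has been done in Proposition~\ref{prop:singularity}, and the corollary is essentially the observation that the dimension bound obtained there becomes an equality and hence saturates the possible singular dimensions when $\dim \cO=\dim \cP$.
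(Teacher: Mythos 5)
Your proof is correct and is exactly the argument the paper intends: the corollary is stated without proof as an immediate consequence of Proposition~\ref{prop:singularity}, and you have simply filled in the obvious details (the bound $\sdim_{\cO}(x)\geq\dim\cP=\dim\cO$ saturates, and effectiveness of the linearized $\Gamma_x$-action then forces $\Gamma_x$ to be trivial, i.e.\ $x\in\Sigma_{\dim\cO}$). Nothing further is needed.
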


\section{Degree of an orbifold map}
\label{sec:degree}
\begin{figure}
\def\svgwidth{.5\textwidth}
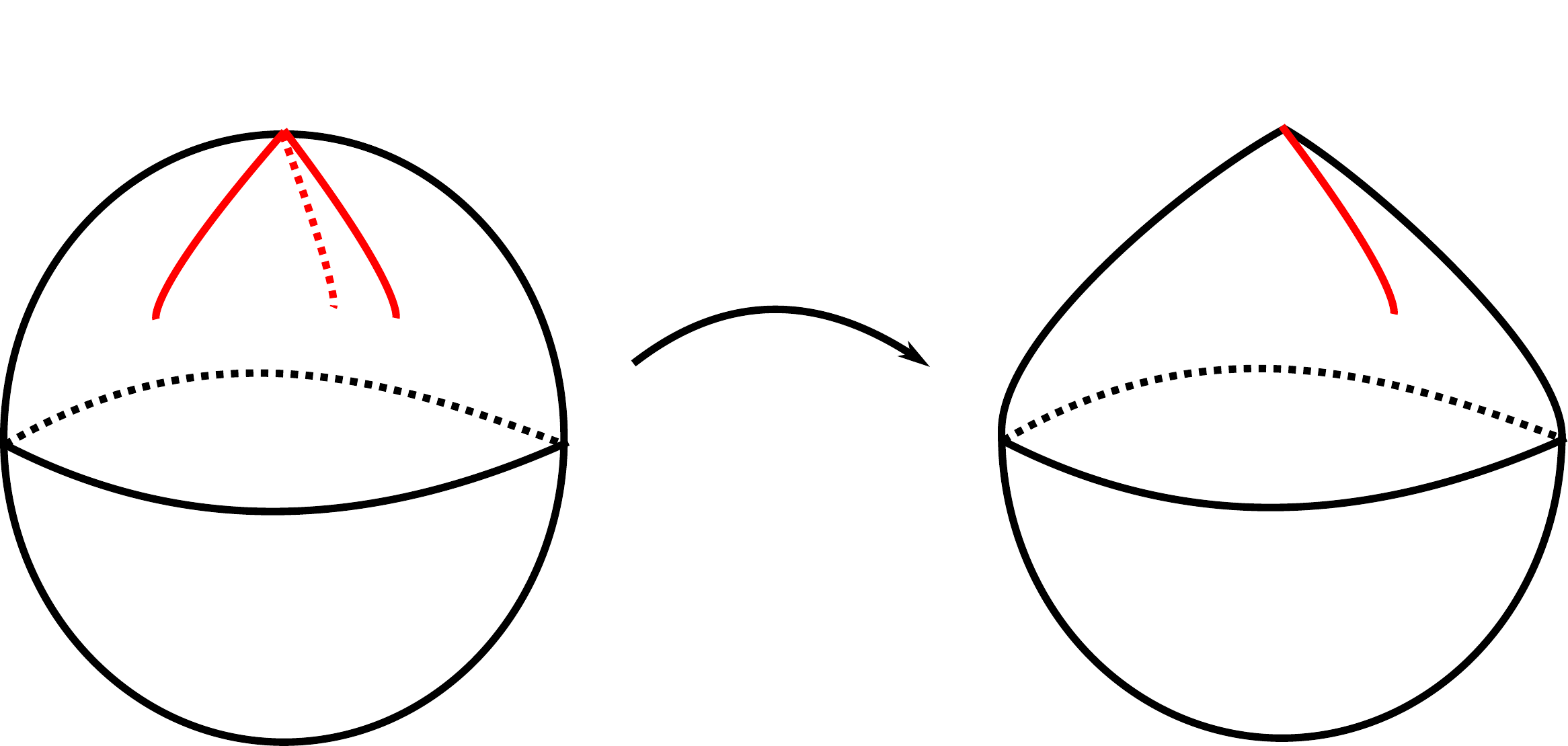

\caption{In Section~\ref{sec:examples} we define complete orbifold maps between weighted projective spaces. This picture shows the map $\cf_{(1,3)}:\mathbb{CP}^1(1,1)\rightarrow \mathbb{CP}^1(1,3)$ with underlying map $f\left([z_0,z_1]_{(1,1)}\right)=[z_0,z_1^3]_{(1,3)}$. The orbifold $\mathbb{CP}^1(1,3)$ has one non-smooth point $[0,1]_{(1,3)}$ with isotropy $\mathbb{Z}_3$, depicted at the top on the right hand side. Even though the point is non-smooth, it is a regular value of $\cf$. The red curve on the right therefore consists of regular values. The preimage of every point on this curve consists of three points, except for the non-smooth point. Thus the cardinality of the preimage is not locally constant. The weighted cardinality however, is, as we show in~Lemma~\ref{lemma:localinvariance}. 
}
  \label{fig:regularvalue}
\end{figure}

In the case of smooth maps between manifolds of the same dimension, if $y$ is a regular value of a proper map, then the cardinality $\#f^{-1}(y)$ is a locally constant function. For an orbifold map, this is not going to be the case, unless we assign \emph{weights} to the points in the preimage, cf.~Figure~\ref{fig:regularvalue}.

  \begin{definition}
Let $\cO$ and $\cP$ be orbifolds of the same dimension. The \emph{weighted cardinality} of the preimage of a regular point $y\in P$ of a smooth proper orbifold map $\cf:\cO\rightarrow \cP$ is the number
\[
\#_w \cf^{-1}(y)=\sum_{x\in f^{-1}(y)} \frac{\vert\Gamma_y\vert}{\vert\Gamma_x\vert}.
\]
\end{definition}
The next Lemma tells us that the weighted cardinality is, in fact, an integer number.
\begin{lemma}
  Let $\cf:\cO\rightarrow \cP$ be a proper and smooth orbifold map between orbifolds of the same dimension. Let $x$ be a regular point and $y=f(x)$. 
 Then $\frac{\vert\Gamma_{y}\vert}{\vert\Gamma_x\vert}$ is an integer.
\end{lemma}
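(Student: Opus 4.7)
The plan is to show that the homomorphism $\Theta_x:\Gamma_x\to \Gamma_y$ attached to the orbifold map $\cf$ at $x$ is injective. Once this is established, Lagrange's theorem identifies $\Gamma_x$ with a subgroup of $\Gamma_y$, so $\vert\Gamma_x\vert$ divides $\vert\Gamma_y\vert$ and the ratio $\vert\Gamma_y\vert/\vert\Gamma_x\vert$ is an integer.

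First I would re-use the linear operator $A_x$ and its properties that were established in the proof of Proposition~\ref{prop:singularity}: it is a linear projection with image contained in $K_x=\ker T_{\tilde x}\tilde f_x$, and every vector in $\ker A_x$ is fixed by every element $\delta\in N_x=\ker\Theta_x$. The construction of $A_x$ does not require $y$ to be a smooth point of $\cP$, so it is available in our setting.

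Next I would exploit the equidimensional assumption. Since $\dim\cO=\dim\cP$ and $x$ is regular, the linear map $T_{\tilde x}\tilde f_x$ is a surjection between spaces of the same dimension, hence an isomorphism, so $K_x=\{0\}$. From $\im A_x\subset K_x=\{0\}$ we conclude $A_x\equiv 0$, i.e.\ $\ker A_x=T_{\tilde x}\tilde U_x$. By the computation already carried out in Proposition~\ref{prop:singularity}, this means that every $\delta\in N_x$ acts as the identity on all of $T_{\tilde x}\tilde U_x$.

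The final, and only delicate, step is to pass from trivial action on the tangent space to triviality of $N_x$ itself. Here I would invoke the Bochner--Cartan linearization recalled in Section~\ref{sec:definitions}: we may assume $\tilde U_x=\mR^n$ and that the $\Gamma_x$-action is linear, in which case a group element is determined by its derivative at the fixed point $\tilde x=0$. Combining this with the effectiveness of the orbifold chart action forces $N_x=\{e\}$, so $\Theta_x$ is injective and the lemma follows. This last point is the only place where care is required; the preceding steps are a direct application of the machinery set up in Proposition~\ref{prop:singularity}.
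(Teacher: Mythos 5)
Your proof is correct, but it takes a genuinely different route from the paper's. The paper deduces injectivity of $\Theta_x$ by citing \cite[Theorem 5.3]{BB2012}, which identifies $N_x=\ker\Theta_x$ with a subgroup of the isotropy group $\Gamma^{\cS}_x$ of the preimage suborbifold $\cS=\cf^{-1}(y)$; since $\cS$ is zero-dimensional it is a manifold, so that isotropy group is trivial. You instead argue directly with the averaging operator $A_x$ from the proof of Proposition~\ref{prop:singularity}: equidimensionality plus regularity force $K_x=0$, hence $A_x=0$, hence every $\delta\in N_x$ has $T_{\tilde x}\delta=\id$, and Bochner--Cartan linearization together with effectiveness of the chart action then kills $\delta$. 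Both arguments are sound and both reduce the lemma to Lagrange's theorem. Your version is more self-contained (it avoids the external structural result on preimage suborbifolds and only reuses computations already displayed in the paper) and makes explicit where effectiveness of the local group actions enters; the paper's version is shorter given the cited machinery and generalizes immediately to the non-equidimensional setting, where $N_x$ need not be trivial but still embeds in $\Gamma^{\cS}_x$. Your handling of the one delicate point --- passing from $T_{\tilde x}\delta=\id$ to $\delta=e$ --- is correctly flagged and correctly resolved: in a linearized chart a group element coincides with its differential at the origin, so triviality of the differential plus effectiveness does the job.
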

\begin{proof}
  To avoid possible confusion, in this proof we label the isotropy groups by the orbifold to which they belong. 
  Part of the data defining a smooth orbifold map is a group homomorphism $\Theta_x:\Gamma^{\cO}_x\rightarrow \Gamma^{\cP}_y$. By ~\cite[Theorem 5.3]{BB2012} the group $N_x=\ker \Theta_x$ can be viewed as a subgroup of $\Gamma^{\cS}_x$, where $\mathcal S=\cf^{-1}(y)$. But $\mathcal S$ is a zero dimensional orbifold, hence a manifold. This implies that $\Gamma^{\cS}_{x}$ is the trivial group and hence that $\Theta_x$ is injective whenever $x$ is a regular point. Then $\vert\im \Theta_x\vert=\frac{\vert\Gamma^{\cO}_x\vert}{\vert\ker \Theta_x\vert}=\vert\Gamma^{\cO}_x\vert.$ The index $[\Gamma^{\cP}_y:\im \Theta_x]=\frac{\vert\Gamma^{\cP}_y\vert}{\vert\im \Theta_x\vert}=\frac{\vert\Gamma^{\cP}_y\vert}{\vert\Gamma^{\cO}_x\vert}$ of a subgroup is always an integer, which was what was to be shown.
\end{proof}

\begin{lemma}
  \label{lemma:localinvariance}
Suppose $\cf:\cO\rightarrow \cP$ is a smooth proper orbifold map between orbifolds of the same dimension and $y$ is a regular value of $\cf$. Then the number $\#_w\cf^{-1}(y)$ is locally constant.
\end{lemma}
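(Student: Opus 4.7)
The plan is a chart-level orbit-counting argument enabled by the inverse function theorem. First I would exploit properness: since $\cf$ is proper and $\cf^{-1}(y)$ is a zero-dimensional suborbifold by Theorem~\ref{thm:preimage}, the preimage $f^{-1}(y)=\{x_1,\dots,x_k\}$ is a finite set. Choose a chart $(\tilde U_y,\Gamma_y)$ centered at a lift $\tilde y$ of $y$, and for each $i$ a chart $(\tilde U_{x_i},\Gamma_{x_i})$ centered at a lift $\tilde x_i$ of $x_i$ carrying a lift $\tilde f_{x_i}:\tilde U_{x_i}\to \tilde U_y$ that is $\Theta_{x_i}$-equivariant. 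Regularity of $x_i$ together with the equal-dimension hypothesis makes $T_{\tilde x_i}\tilde f_{x_i}$ a linear isomorphism, so by the inverse function theorem I can shrink $\tilde U_{x_i}$ so that $\tilde f_{x_i}$ becomes a diffeomorphism onto an open neighborhood $\tilde W_i\subset \tilde U_y$ of $\tilde y$. Properness (hence closedness) of $f$ then lets me further shrink to a neighborhood $V$ of $y$ with $f^{-1}(V)\subset \bigsqcup_i U_{x_i}$.

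Next, for a regular value $y'\in V$, I would choose a lift $\tilde y'\in \tilde U_y$ close to $\tilde y$ and, by further shrinking $V$, arrange that the full $\Gamma_y$-orbit $\Gamma_y\cdot \tilde y'$ lies in $\bigcap_i \tilde W_i$. The weighted count splits as $\#_w\cf^{-1}(y')=\sum_i \sum_{x\in f^{-1}(y')\cap U_{x_i}} \frac{\vert\Gamma_{y'}\vert}{\vert\Gamma_x\vert}$, and I would show each inner sum equals $\frac{\vert\Gamma_y\vert}{\vert\Gamma_{x_i}\vert}$, the corresponding summand in $\#_w\cf^{-1}(y)$. Introduce the set $T_i:=\tilde f_{x_i}^{-1}(\Gamma_y\cdot \tilde y')$. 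Since $\tilde f_{x_i}$ is a bijection onto $\tilde W_i$ and $\Gamma_y\cdot\tilde y'\subset \tilde W_i$ has cardinality $\vert\Gamma_y\vert/\vert\Gamma_{y'}\vert$ (orbit-stabilizer applied to the $\Gamma_y$-action on $\tilde U_y$), we get $\vert T_i\vert=\vert\Gamma_y\vert/\vert\Gamma_{y'}\vert$. By $\Theta_{x_i}$-equivariance, $T_i$ is $\Gamma_{x_i}$-invariant, and the $\Gamma_{x_i}$-orbits in $T_i$ are in bijection with points of $f^{-1}(y')\cap U_{x_i}$; moreover the stabilizer in $\Gamma_{x_i}$ of any representative $\tilde x\in T_i$ is identified with the isotropy $\Gamma_x$ of the corresponding orbifold point $x$. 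A second application of orbit-stabilizer inside $\tilde U_{x_i}$ yields $\vert T_i\vert=\sum_{x\in f^{-1}(y')\cap U_{x_i}}\frac{\vert\Gamma_{x_i}\vert}{\vert\Gamma_x\vert}$.

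Combining the two expressions for $\vert T_i\vert$, the contribution of $U_{x_i}$ becomes
$\frac{\vert\Gamma_{y'}\vert}{\vert\Gamma_{x_i}\vert}\cdot\vert T_i\vert=\frac{\vert\Gamma_{y'}\vert}{\vert\Gamma_{x_i}\vert}\cdot\frac{\vert\Gamma_y\vert}{\vert\Gamma_{y'}\vert}=\frac{\vert\Gamma_y\vert}{\vert\Gamma_{x_i}\vert}$,
which is independent of $y'$, and summing over $i$ gives $\#_w\cf^{-1}(y')=\#_w\cf^{-1}(y)$. I expect the main obstacle to be the careful book-keeping between the two charts, in particular justifying the identification of the stabilizer of $\tilde x\in T_i$ in $\Gamma_{x_i}$ with the isotropy $\Gamma_x$ of the orbifold point $x$; this is essentially automatic from the definition of the chart structure on $U_{x_i}$, but it is the step where the orbit-stabilizer calculation has to line up with the orbifold-theoretic weights.
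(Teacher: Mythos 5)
Your proposal is correct and follows essentially the same route as the paper's proof: shrink charts so the lifts $\tilde f_{x_i}$ become bijections near $\tilde x_i$, count the $\vert\Gamma_y\vert/\vert\Gamma_{y'}\vert$ lifts of a nearby regular value in $\tilde U_y$, and then quotient by the $\Gamma_{x_i}$-action to recover the weighted count. Your orbit--stabilizer bookkeeping is in fact slightly more careful than the paper's (which passes to a ``generic'' preimage $w$ and invokes that all $\Gamma_w$ are isomorphic), but it is the same argument.
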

\begin{proof}
Choose orbifolds charts $U_y$ and $U_x$ around around $y$ and $x\in f^{-1}(y)$, respectively. After shrinking $U_y$ and $U_x$ we may assume the following: $U_y$ only contains regular values, $ f^{-1}(U_y)\subset \bigcup_{x\in f^{-1}(y)}U_x$, and there exists a lift $\tilde{f}_x: \tilde{U}_x\rightarrow \tilde{U}_y$. If $z$ is another regular value of $f$ contained in $U_y$, then $z$ has $\vert\Gamma_y\vert/\vert\Gamma_z\vert$ lifts to $\tilde{U}_y$. Given that $\#\tilde{f}_x^{-1}(y)$ is a local invariant, by possibly shrinking the orbifold chart around $y$ we can assume that each one of these lifts has precisely one preimage in $\tilde{U}_x$. When projecting down from $\tilde{U}_x$ to $U_x$, some of these preimages are identified by the action of the isotropy groups $\Gamma_{x}$, so the number of points we are left with in $U_x$ is 
\[
\frac{\vert\Gamma_{y}\vert/ \vert\Gamma_z\vert}{\vert\Gamma_{x}\vert/ \vert\Gamma_{w}\vert}=\frac{\vert\Gamma_y\vert}{\vert\Gamma_{x}\vert}\cdot\frac{\vert\Gamma_{w}\vert}{\vert\Gamma_z\vert},
\]
where $w$ is a generic preimage of $z$ in $U_x$. By equivariance, all groups $\Gamma_w$ are isomorphic.
So if we compute the weighted cardinality of $z$ with the above formula in mind we see that
\begin{align*}
\#_w \cf^{-1}(z)&=\sum\limits_{w\in f^{-1}(z)} \frac{\vert\Gamma_z\vert}{\vert\Gamma_{w}\vert}=\sum\limits_{x\in f^{-1}(y)}\sum\limits_{w\in U_x\cap f^{-1}(z)}  \frac{\vert\Gamma_z\vert}{\vert\Gamma_{w}\vert}\\
  &=\sum\limits_{x\in f^{-1}(y)} \frac{\vert\Gamma_y\vert}{\vert\Gamma_{x}\vert}\cdot\frac{\vert\Gamma_{w}\vert}{\vert\Gamma_z\vert}\cdot\frac{\vert\Gamma_z\vert}{\vert\Gamma_{w}\vert}=\sum\limits_{x\in f^{-1}(y)} \frac{\vert\Gamma_y\vert}{\vert\Gamma_{x}\vert}\\
  &=\#_w \cf^{-1}(y). \hfill \qedhere
\end{align*}
\end{proof}

  \begin{definition}
Let $\cf:\cO\rightarrow \cP$ be a proper and smooth map between orbifolds of the same dimension, and let $y\in P$ be a regular value of $\cf$. We define the \emph{mod-$2$ degree} of $\cf$ at $y$ to be
$$
\deg_2(\cf;y)=\#_w\cf^{-1}(y)\mod 2.
$$
\end{definition}

\begin{proposition}
  \label{prop:locallyconstant}
The mod-$2$ degree is locally constant. 
\end{proposition}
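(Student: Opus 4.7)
The proof plan is essentially a one-line reduction to Lemma~\ref{lemma:localinvariance}, but it is worth spelling out what has to be checked. I want to show that for every regular value $y \in P$ there exists a neighborhood $W \subset P$ of $y$ such that every $z \in W$ is also a regular value and $\deg_2(\cf;z) = \deg_2(\cf;y)$.

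My first step would be to observe that the set of regular values is open in $P$. This was noted in the Remark following the orbifold Sard theorem: the set of critical points is closed in $O$ by continuity of the relevant rank function, and since $\cf$ is proper (so $f$ is closed), its image, the set of critical values, is closed in $P$. Consequently the regular values form an open subset, and I may choose a neighborhood $W$ of $y$ containing only regular values.

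Next, I would apply Lemma~\ref{lemma:localinvariance} directly: the function $z \mapsto \#_w \cf^{-1}(z)$ is locally constant on the set of regular values, so, possibly after shrinking $W$, it equals $\#_w \cf^{-1}(y)$ for every $z \in W$. Since reduction modulo $2$ is well defined on the integers, the composition $z \mapsto \#_w \cf^{-1}(z) \bmod 2$ is also constant on $W$, which is exactly the statement that $\deg_2(\cf;z) = \deg_2(\cf;y)$ for every $z \in W$.

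There is no real obstacle to overcome here: the whole content sits in Lemma~\ref{lemma:localinvariance}, and the present proposition merely records that local constancy is inherited by the mod-$2$ reduction. The only minor point requiring mention is the openness of the set of regular values, which is why properness of $\cf$ enters.
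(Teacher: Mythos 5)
Your proof is correct and follows exactly the route the paper intends: the paper states this proposition without proof as an immediate consequence of Lemma~\ref{lemma:localinvariance}, together with the openness of the set of regular values noted in the remark after Sard's theorem. Your write-up just makes these two ingredients explicit.
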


In fact we will show below that $\deg_2(\cf;y)$ is constant in $y$ if $\mathcal P$ is connected and $\cO$ does not have a codimension-$1$ singular stratum. We start by showing homotopy invariance at regular values.

\begin{lemma}
  \label{lem:homotopyinvariance}
Let $\cf,\cg:\cO\rightarrow \cP$ be smooth and proper orbifold maps between orbifolds of the same dimension. Let $\cF:\cO\times [0,1]\rightarrow \cP$ be a proper homotopy between $\cf$ and $\cg$. Let $y\in P$ be a smooth point. Suppose that $y$ is regular for $\cF,\cf$, and $\cg$ simultaneously, and that $\Sigma^{\cO}_{\dim \cO-1}=\emptyset$. Then
\[
  \deg_2(\cf;y)=\deg_2(\cg;y). 
\]
\end{lemma}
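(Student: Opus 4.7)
The plan is to reduce to the classical manifold situation by using the orbifold regular preimage theorem (Theorem~\ref{thm:preimage}) together with Proposition~\ref{prop:singularity} to show that $\cF^{-1}(y)$ is in fact a genuine $1$-manifold with boundary, and then invoke the standard classification of compact $1$-manifolds.

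First, by Theorem~\ref{thm:preimage} applied to $\cF$, the preimage $\cF^{-1}(y)$ is a full suborbifold of $\cO\times[0,1]$ of dimension $\dim(\cO\times[0,1])-\dim\cP=1$, with boundary contained in $\partial(\cO\times[0,1])=\cO\times\{0,1\}$; the hypothesis that $y$ is also regular for $\cf$ and $\cg$ identifies this boundary with the disjoint union $\cf^{-1}(y)\sqcup\cg^{-1}(y)$. Since $\cF$ is proper and $\{y\}$ is compact, $\cF^{-1}(y)$ is compact.

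Next I would exploit that $y$ is a smooth point of $\cP$ together with the assumption $\Sigma^\cO_{\dim\cO-1}=\emptyset$ to remove all orbifold singularities from the picture. Applying Proposition~\ref{prop:singularity} to $\cF:\cO\times[0,1]\to\cP$, every $(x,t)\in \cF^{-1}(y)$ satisfies $\sdim_{\cO\times[0,1]}(x,t)\geq\dim\cP=\dim\cO$. The product orbifold structure on $\cO\times[0,1]$ gives $\sdim_{\cO\times[0,1]}(x,t)=\sdim_\cO(x)+1$, since the $[0,1]$-factor contributes a trivially acted upon line to the tangent space. Hence $\sdim_\cO(x)\geq\dim\cO-1$, and the hypothesis $\Sigma^\cO_{\dim\cO-1}=\emptyset$ forces $\sdim_\cO(x)=\dim\cO$, so $x$ is smooth in $\cO$ and $\Gamma_x$ is trivial. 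Therefore $\cF^{-1}(y)$ is a full suborbifold with trivial isotropy everywhere, i.e. an honest compact $1$-manifold with boundary $\cf^{-1}(y)\sqcup\cg^{-1}(y)$.

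To conclude, I would note that since $y$ is smooth, $|\Gamma_y|=1$, and since each preimage point is smooth in $\cO$, $|\Gamma_x|=1$ for all $x\in f^{-1}(y)\cup g^{-1}(y)$. Consequently the weighted counts reduce to ordinary cardinalities: $\#_w\cf^{-1}(y)=\#f^{-1}(y)$ and $\#_w\cg^{-1}(y)=\#g^{-1}(y)$. The classification of compact $1$-manifolds tells us that $\#\partial(\cF^{-1}(y))=\#f^{-1}(y)+\#g^{-1}(y)$ is even, which gives $\deg_2(\cf;y)=\deg_2(\cg;y)$.

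The main subtlety, and the reason the hypothesis on the codimension-one stratum is indispensable, lies in step two: without it, points of $\cF^{-1}(y)$ might have $\sdim_\cO=\dim\cO-1$, producing reflection-type isotropy on the $1$-dimensional fiber, and then $\cF^{-1}(y)$ would only be an orbifold-with-boundary rather than a manifold, obstructing the naive parity argument. Everything else is bookkeeping, so verifying that the product orbifold structure really does increase singular dimension by exactly one is the only small computation I would double-check carefully.
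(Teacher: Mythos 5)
Your proof is correct and follows essentially the same route as the paper's: apply Theorem~\ref{thm:preimage} and Proposition~\ref{prop:singularity} together with the product formula $\sdim_{\cO\times[0,1]}(x,t)=\sdim_{\cO}(x)+1$ and the hypothesis $\Sigma^{\cO}_{\dim\cO-1}=\emptyset$ to conclude that $\cF^{-1}(y)$ is a genuine compact $1$-manifold with boundary $f^{-1}(y)\sqcup g^{-1}(y)$, then use the parity of the number of boundary points. Your remark on why the codimension-one hypothesis is indispensable matches the paper's intent as well.
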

\begin{proof}
  Since $y$ is a regular point of $\cf,\cg$, and $\cF$ simultaneously, Theorem~\ref{thm:preimage} tells us that $\cF^{-1}(y)$ is a full one-dimensional orbifold with boundary, while Proposition~\ref{prop:singularity} implies that all points in $ f^{-1}(y)\cup g^{-1}(y)$ are smooth, and thus the degree formulas just count the number of points in the preimage, possibly taking orientation into account. For $(x,t)\in O\times[0,1]$ we see that $\sdim(x,t)=\sdim(x)+1\geq \dim \cP=\dim \cO$. Since we assume that $\Sigma^{\cO}_{\dim \cO-1}=\emptyset$, it follows that $\Sigma^{\cO\times [0,1]}_{\dim(\cO\times [0,1])-1}=\emptyset$ and each $(x,t) \in F^{-1}(y)$ is necessarily a smooth point of $\cO\times[0,1]$. This implies that $F^{-1}(y)$ is a compact \emph{manifold} with boundary $(f^{-1}(y)\times\{0\})\cup (g^{-1}(y)\times\{1\}).$ The number of boundary components of a compact one dimensional manifold with boundary is even, which shows that $\deg_2(\cf;y)=\deg_2(\cg;y)$.
\end{proof}

\begin{lemma}
  \label{lem:invariance}
  Let $\cf:\cO\rightarrow \cP$ be a smooth and proper orbifold map between orbifolds of the same dimension. Assume that $\cP$ is connected and that $\Sigma^{\cO}_{\dim \cO-1}=\emptyset$. Let $y,z\in P$ be regular values that are smooth points. Then
  \[
  \deg_2(\cf;y)=\deg_2(\cf;z).
  \]
\end{lemma}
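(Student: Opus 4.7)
The plan is to reduce the claim to Lemma~\ref{lem:homotopyinvariance} by constructing a proper orbifold homotopy $\cF$ from $\cf$ to a map $\cg$ for which $\deg_2(\cg;z)$ is manifestly equal to $\deg_2(\cf;y)$, arranged so that $z$ is a regular value throughout the homotopy.

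First I would exhibit a smooth embedded path $\gamma:[0,1]\rightarrow \cP$ with $\gamma(0)=z$ and $\gamma(1)=y$ whose image consists entirely of smooth regular values of $\cf$. By Proposition~\ref{prop:structure} the smooth stratum $\Sigma^{\cP}_{\dim \cP}$ is a connected manifold, and because $\cf$ is proper the regular values of $\cf$ form an open dense subset of $\cP$. When $\dim \cP\geq 2$ a generic smooth arc in the smooth stratum joining $y$ and $z$ avoids the nowhere dense critical set and provides the required $\gamma$.

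Next I would turn $\gamma$ into a compactly supported orbifold isotopy $\{\ch_t\}_{t\in[0,1]}$ of $\cP$ with $\ch_0=\id$ and $\ch_t(\gamma(t))=z$ for every $t$, so that in particular $\ch_1(y)=z$. Since $\gamma$ lies in the smooth stratum, this isotopy can be generated by a time-dependent vector field supported in a tubular neighbourhood of $\gamma([0,1])$ and extended by zero across the singular locus; it is then automatically an orbifold isotopy. Setting $\cg=\ch_1\circ\cf$ and $\cF(x,t)=\ch_t(\cf(x))$ one obtains a proper orbifold homotopy from $\cf$ to $\cg$ with $\cF^{-1}(z)=\{(x,t):\cf(x)=\gamma(t)\}$. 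Since each $\ch_t$ is an orbifold diffeomorphism and every $\gamma(t)$ is regular for $\cf$, the differential of $\cF$ is surjective at every point of $\cF^{-1}(z)$, so $z$ is simultaneously regular for $\cf$, $\cg$, and $\cF$. Lemma~\ref{lem:homotopyinvariance} then yields $\deg_2(\cf;z)=\deg_2(\cg;z)$; combining this with the identification $\cg^{-1}(z)=\cf^{-1}(y)$ and the equality $\vert\Gamma^\cP_y\vert=\vert\Gamma^\cP_z\vert=1$, which holds because $y,z$ are smooth points, gives $\deg_2(\cg;z)=\deg_2(\cf;y)$ and completes the argument.

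The main obstacle I expect is the construction of the path $\gamma$ through regular values. This is routine when $\dim\cP\geq 2$, but in the one-dimensional case critical values can separate the smooth regular locus into distinct arcs. In that situation I would fall back on a local version of the argument: subdivide a path from $y$ to $z$ into finitely many segments and, at each critical value $c$ encountered, bridge across via the compact one-dimensional cobordism $\cf^{-1}$ of a tiny interval around $c$, which is an ordinary manifold with boundary thanks to the vanishing of $\Sigma^{\cO}_{\dim\cO-1}$, so that the weighted boundary counts on either side of $c$ agree modulo $2$.
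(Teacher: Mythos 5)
The central step of your argument --- the existence of a smooth arc from $z$ to $y$ consisting entirely of regular values --- fails in general, and it fails precisely in the case $\dim\cP\geq 2$ that you declare routine. The critical value set of a proper smooth map is closed with empty interior (and has measure zero by Sard), but such a set can still separate the regular values: for the proper map $f:\mR^n\rightarrow\mR^n$, $f(x_1,\dots,x_n)=(x_1,\dots,x_{n-1},x_n^2)$, the critical value set is the hyperplane $\{x_n=0\}$, and no arc joins the regular value $(0,\dots,0,1)$ (two preimages) to $(0,\dots,0,-1)$ (no preimages) without crossing it. A generic arc meets such a locus transversally; it does not avoid it. Note also that if your arc existed, Lemma~\ref{lemma:localinvariance} alone would finish the proof (a locally constant function on a connected set is constant), so the homotopy apparatus you build on top of it would be redundant --- a sign that the real difficulty of the lemma is exactly the passage across critical values, which your construction assumes away rather than handles.

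The paper's proof asks for no regularity along the way. It takes the compactly supported isotopy $\ch$ of the connected smooth stratum provided by Milnor's homogeneity lemma, with $\ch_0=\id$ and $\ch_1(z)=y$, extends it by the identity to an orbifold isotopy of $\cP$, forms the homotopy $\cF(x,t)=\ch_t(f(x))$ from $\cf$ to $\cg=\ch_1\circ\cf$, and applies Lemma~\ref{lem:homotopyinvariance} at the single value $y$. The point is that the one-dimensional cobordism $\cF^{-1}(y)$ is permitted to pass through critical points of $\cf$: preimages are born and die in pairs, which is exactly what the mod-$2$ count tolerates. Your fallback for $\dim\cP=1$ --- bridging across each critical value by the compact one-dimensional preimage of a small interval --- is this mechanism in miniature, and it is this cobordism argument, not a path of regular values, that must carry the proof in every dimension. (Your final bookkeeping, $\cg^{-1}(z)=\cf^{-1}(y)$ together with $\vert\Gamma_y\vert=\vert\Gamma_z\vert=1$, is correct and agrees with the paper's.)
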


\begin{proof}
 Recall that a connected manifold $M$ is homogeneous, that is, for any $y,z\in M$ there exists a compactly supported isotopy $h:M\times [0,1]\rightarrow M$ such that $h_0=\id_M$ and $h_1(z)=y$, cf.~\cite[The homogeneity Lemma]{milnor1997}. For every $t\in [0,1]$ the map $h_t:M\rightarrow M$ is a diffeomorphism, so all values are regular for $h$.
  By Proposition~\ref{prop:structure}, the smooth part $M=\Sigma^\cP_{\dim \cP}$ is a connected manifold. Hence for any $y,z\in M$ there is a compactly supported isotopy $\ch:M\times [0,1]\rightarrow M$ as above. Since the isotopy is compactly supported in $M\times [0,1]$, it extends to an orbifold isotopy $\ch:\cP\times[0,1]\rightarrow \cP$ that coincides with $\cid_{\cP\times[0,1]}$ outside of a compact subset of $M\times[0,1]$. Let $\cF=\cf\circ \ch$. The orbifold map $\cF$ is a homotopy between $\cf$ and $\cg=\cf\circ \ch_1$. Then $y$ is a regular value for $\cf,\cg$ and $\cF$ simultaneously and Lemma~\ref{lem:homotopyinvariance} shows that
  \[
    \deg_2(\cf;z)=\deg_2(\cg;y)=\deg_2(\cf;y). \qedhere
  \]
\end{proof}

In the next step we combine invariance at smooth, regular points with the local invariance of the mod-$2$ degree, in order to prove invariance at all regular points.

\begin{theorem}\label{thm:independence}
  Let $\cf:\cO\rightarrow \cP$ be a smooth and proper orbifold map between orbifolds of the same dimension. Assume that $\cP$ is connected and that $\Sigma^{\cO}_{\dim \cO-1}=\emptyset$. Let $y,z\in P$ be regular points. Then
\[
  \deg_2(\cf;y)=\deg_2(\cf;z). 
  \]
\end{theorem}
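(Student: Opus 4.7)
The plan is to reduce the statement to Lemma~\ref{lem:invariance}, which already handles the case of two \emph{smooth} regular values, by using local constancy of the degree (Proposition~\ref{prop:locallyconstant}) together with the density of the smooth stratum (Proposition~\ref{prop:structure}). In essence, I want to perturb each regular point to a nearby smooth regular value without changing the degree, then invoke the previous lemma.

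More concretely, I would first observe that the set $R\subset P$ of regular values of $\cf$ is open: this is the remark following the orbifold Sard theorem, which uses that $\cf$ is proper (so the critical set is closed and its image is closed). Next, by Proposition~\ref{prop:structure}, the smooth stratum $M=\Sigma^{\cP}_{\dim \cP}$ is open and dense in $P$. Therefore $R\cap M$ is an open dense subset of $R$.

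Given a regular point $y\in P$, Proposition~\ref{prop:locallyconstant} provides a neighborhood $V\subset R$ of $y$ on which $\deg_2(\cf;\cdot)$ is constant. By density, I can pick a point $y'\in V\cap M$, which is a smooth regular value satisfying $\deg_2(\cf;y')=\deg_2(\cf;y)$. Performing the same construction for $z$, I obtain a smooth regular value $z'$ with $\deg_2(\cf;z')=\deg_2(\cf;z)$. Now Lemma~\ref{lem:invariance} applies to $y'$ and $z'$ (both are smooth regular values, and the hypotheses on $\cP$ connected and $\Sigma^{\cO}_{\dim\cO-1}=\emptyset$ are inherited from the theorem's assumptions), yielding $\deg_2(\cf;y')=\deg_2(\cf;z')$. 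Chaining the three equalities gives the conclusion.

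There is no real obstacle here beyond lining up the pieces correctly; the substance of the argument has already been done in Lemmas~\ref{lem:homotopyinvariance} and~\ref{lem:invariance}. The only subtle point worth flagging in the write-up is the justification that the regular values form an \emph{open} set (and not merely a dense one), since local constancy of $\deg_2$ is only asserted at regular values, so I need an open neighborhood of $y$ made entirely of regular values before invoking density of $M$.
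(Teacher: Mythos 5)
Your proposal is correct and follows exactly the paper's own argument: perturb $y$ and $z$ to nearby smooth regular values using local constancy of $\deg_2$ (which requires the openness of the set of regular values, guaranteed by properness) and the density of the smooth stratum, then apply Lemma~\ref{lem:invariance}. No differences worth noting.
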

\begin{proof}
  In view of Lemma~\ref{lem:invariance}, the only thing that remains to prove is that the degree is invariant for regular but not necessarily smooth points. This follows from the local invariance: suppose $y,z$ are regular but not necessarily smooth. Since the smooth stratum $\Sigma^\cP_{\dim \cP}$ is an open and dense subset of $P$, and regular values of $\cf$ are also open and dense, there exist smooth and regular values $y'$ and $z'$, sufficiently close to $y$ and $z$, respectively, such that $\deg_2(\cf;y)=\deg_2(\cf;y')$ and $\deg_2(\cf;z)=\deg_2(\cf;z')$. If we combine this with the conclusion of Lemma~ \ref{lem:invariance}, we see that
    \[
\deg_2(\cf;y)=\deg_2(\cf;y')=\deg_2(\cf;z')=\deg(\cf;z).\qedhere
    \]
  
\end{proof}
As in the manifold case, we can define an integer valued degree if the orbifolds are oriented. For every $x\in f^{-1}(y)$ choose charts $U_x$ and $U_{y}$ centered at $x$ and $y$, respectively. Let $\tilde{f}_x$ be the lift of $f$ in these charts and let $\tilde x$ be the lift of $x$. The sign $\textrm{sgn}(T_{\tilde{x}}\tilde{f}_{x})$, which tells us if the map is orientation preserving or reversing at $\tilde x$, does not depend on the choice of lift $\tilde x$.

  \begin{definition}
    Let $\cf:\cO\rightarrow \cP$ be a proper and smooth map between oriented orbifolds of the same dimension and $y\in P$ a regular value of $\cf$. Define the \emph{(integer valued or oriented) degree of $f$ at $y$} to be the oriented count of points in the preimage of $y$, namely:
\[
\deg(\cf;y)=\sum_{x\in f^{-1}(y)}\textrm{sgn}(T_{\tilde{x}}\tilde{f}_x)\cdot\frac{\vert\Gamma_y\vert}{\vert\Gamma_x\vert}.
\]
\end{definition}
The integer valued degree shares the invariance properties of the mod-$2$ degree. 
\begin{theorem}
  \label{thm:oriented}
  Let $\cf:\cO\rightarrow \cP$ be a proper and smooth map between oriented orbifolds of the same dimension. Assume that $\cP$ is connected and that $y\in P$ a regular value of $\cf$. Then the degree $\deg(\cf;y)$ does not depend on the the regular value $y$, nor on the proper homotopy class of $\cf$. 
\end{theorem}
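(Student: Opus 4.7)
The plan is to mimic the strategy used for the mod-$2$ degree, adding sign bookkeeping at each step. A preliminary observation simplifies matters: by Proposition~\ref{prop:structure}, the orientability assumption on $\cO$ already guarantees $\Sigma^\cO_{\dim \cO-1}=\emptyset$, so the codimension-$1$ hypothesis that appeared throughout Section~\ref{sec:degree} is automatic in the oriented setting.

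The first step is to establish an oriented analogue of Lemma~\ref{lemma:localinvariance}: the signed weighted count $\deg(\cf;\cdot)$ is locally constant on the set of regular values. In the local picture of that lemma, each lift $\tilde f_x:\tilde U_x\rightarrow \tilde U_y$ is a local diffeomorphism at $\tilde x$, so the sign of its differential is constant on a neighborhood of $\tilde x$. The identifications between lifts in the source and target charts are implemented by elements of $\Gamma_x^\cO$ and $\Gamma_y^\cP$, which are orientation preserving by hypothesis, so the same rearrangement as in Lemma~\ref{lemma:localinvariance} goes through with a common sign $\textrm{sgn}(T_{\tilde x}\tilde f_x)$ attached to every term contributed near $x$.

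Next, I would prove an oriented analogue of Lemma~\ref{lem:homotopyinvariance} at smooth regular values. Given a proper homotopy $\cF:\cO\times[0,1]\rightarrow \cP$ between $\cf$ and $\cg$ and a smooth point $y$ that is simultaneously regular for $\cf$, $\cg$, and $\cF$, Proposition~\ref{prop:singularity} ensures that every point of $F^{-1}(y)$ is smooth with trivial isotropy. Since the codimension-$1$ stratum of $\cO\times[0,1]$ is empty, $\cF^{-1}(y)$ is then a genuine compact oriented one-dimensional manifold with boundary $(f^{-1}(y)\times\{0\})\sqcup (g^{-1}(y)\times\{1\})$, and the weighted count at these smooth points reduces to the ordinary signed count. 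The standard oriented $1$-cobordism argument yields $\deg(\cf;y)=\deg(\cg;y)$. The homogeneity argument of Lemma~\ref{lem:invariance} then gives independence at smooth regular values, using that the orbifold isotopy $\ch$ extended from the smooth stratum is orientation preserving (each time slice is isotopic to the identity on an oriented manifold, hence preserves orientation). Combining with the oriented local constancy from the first step, in the same way as in Theorem~\ref{thm:independence}, extends invariance to all regular values. Independence of the proper homotopy class at an arbitrary regular value $y$ then follows by perturbing $y$ to a smooth value $y'$ that is simultaneously regular for $\cf$, $\cg$, and the homotopy, applying smooth-point homotopy invariance at $y'$, and invoking local constancy on both ends.

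The main obstacle is the sign-tracking in the oriented cobordism argument: one has to check that the boundary orientation induced on $\partial \cF^{-1}(y)$ is compatible with the pointwise signs $\textrm{sgn}(T_{\tilde x}\tilde f_x)$ and $\textrm{sgn}(T_{\tilde x}\tilde g_x)$ appearing in the definition of $\deg$, so that the vanishing of the signed count at the boundary of a compact oriented $1$-manifold really gives $\deg(\cf;y)=\deg(\cg;y)$. Once all preimage points are known to be smooth (via Proposition~\ref{prop:singularity}) and the codimension-$1$ stratum is known to be empty (via Proposition~\ref{prop:structure}), this reduces to the standard manifold convention and is routine.
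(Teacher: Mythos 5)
Your proposal is correct and follows essentially the same route as the paper: it reduces to the mod-$2$ arguments with sign bookkeeping, uses Proposition~\ref{prop:structure} to note that orientability of $\cO$ makes the codimension-$1$ hypothesis automatic, and replaces the parity argument for $\partial F^{-1}(y)$ by the vanishing of the signed boundary count of a compact oriented $1$-manifold. No substantive differences from the paper's proof.
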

\begin{proof}
The proof of Lemma~\ref{lemma:localinvariance} shows that also $\deg(\cf;y)$ is locally constant, since the sign $\textrm{sgn}(T_{\tilde{x}}\tilde{f}_{x})$ is locally constant in each $U_x$. Recall the proof of \ref{lem:homotopyinvariance} needs assumption $\Sigma_{\dim(\cO)-1}^{\cO}=\emptyset$, because this implies that the preimage of a regular and smooth point of the homotopy is a compact, one-dimensional manifold with boundary. But this is automatically satisfied in this case, since $\cO$ is assumed to be oriented, cf.~Proposition~\ref{prop:structure}. This implies that we can follow the proof of Lemma~\ref{lem:homotopyinvariance} to show homotopy invariance for the oriented degree. The only thing left to do is to keep track of the orientations. Since $\cO$ and $\cP$ are oriented, the compact one-dimensional manifold $F^{-1}(y)$ is also oriented, and the oriented count of the boundary components is zero. The proofs of Lemma~\ref{lem:invariance} and Theorem~\ref{thm:independence} work then ad verbatim for the oriented degree. 
\end{proof}

  Since under the assumptions of Theorems~\ref{thm:independence} and~\ref{thm:oriented} the degrees do not depend on the chosen regular value $y$, in what follows we will write $\deg_2(\cf)=\deg_2(\cf;y)$ and $\deg(\cf)=\deg(\cf;y)$, where $y$ is any regular value of $\cf$. The following statement is an immediate corollary of the fact that the degree does not depend on the regular value chosen. 
  
\begin{corollary}
Let $\cf:\cO\rightarrow \cP$ be a smooth and proper orbifold map. Assume that $\cP$ is connected and that $\Sigma^\cO_{\dim \cO-1}=\emptyset$. Then if either $\deg_2(\cf)\not=0$ or $\deg(\cf)\not=0$ then the underlying map $f$ is surjective. Here we assume that $\cO$ and $\cP$ are oriented for the integer valued degree to be defined. 
\end{corollary}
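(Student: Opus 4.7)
The plan is to argue by contrapositive: assume the underlying map $f$ is not surjective and show that both $\deg_2(\cf)$ and $\deg(\cf)$ vanish. The key observation is that a point outside the image is, vacuously, a regular value, so it can be used to compute the degree.

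First, I would use properness to control the image. Since $\cf$ is proper, the continuous map $f:O\to P$ sends closed sets to closed sets, in particular $f(O)\subset P$ is closed. If $f$ fails to be surjective, then $P\setminus f(O)$ is a nonempty open subset of $P$. Pick any $y\in P\setminus f(O)$. Because $f^{-1}(y)=\emptyset$, the condition that every point in $f^{-1}(y)$ is regular is trivially satisfied, so $y$ is a regular value of $\cf$.

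Next I compute the degree at this $y$. By definition the weighted cardinality $\#_w\cf^{-1}(y)$ is an empty sum, hence equals $0$; in the oriented case the signed count likewise vanishes. Therefore $\deg_2(\cf;y)=0$ and, when orientations are present, $\deg(\cf;y)=0$.

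Finally, I invoke the invariance results already proved. Under the standing hypotheses ($\cP$ connected and $\Sigma^{\cO}_{\dim\cO-1}=\emptyset$, the latter being automatic in the oriented setting by Proposition~\ref{prop:structure}), Theorem~\ref{thm:independence} asserts that $\deg_2(\cf;y)$ is independent of the chosen regular value $y$, and Theorem~\ref{thm:oriented} gives the analogous statement for $\deg(\cf;y)$. Combining these with the vanishing computed above yields $\deg_2(\cf)=0$ and $\deg(\cf)=0$, contradicting the assumption that at least one of them is nonzero. There is no real obstacle here; the only subtle point is ensuring $y\in P\setminus f(O)$ counts as a regular value, which is immediate from the vacuous preimage.
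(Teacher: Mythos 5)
Your argument is correct and is exactly the one the paper intends when it calls the statement ``an immediate corollary'' of the independence of the degree from the regular value: a point outside the (closed) image is vacuously a regular value, the (weighted or signed) count there is an empty sum, and Theorems~\ref{thm:independence} and~\ref{thm:oriented} force the degree to vanish. No gaps; the remark that properness makes $f(O)$ closed is not even needed here, since any point with empty preimage is regular regardless.
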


Another consequence of the fact that the degree of an orbifold map is independent of the choice of regular value is the next proposition, which shows that the degree behaves multiplicatively with respect to composition. 

  \begin{proposition}
    \label{prop:multiplicative}
    Let $\cf:\cO\rightarrow \cP$ and $\cg:\cP\rightarrow \cS$ be smooth and proper orbifolds maps between orbifolds of the same dimension. Assume $\cP$ and $\cS$ to be connected and that $\Sigma^\cO_{\dim \cO-1}=\Sigma^\cP_{\dim \cP-1}=\emptyset$. Then for any regular value $y$ of $\cg\cf$ we have
    \[
\deg_2(\cg\cf)=\deg_2(\cg)\deg_2(\cf)\qquad\text{and}\qquad      \deg(\cg\cf)=\deg(\cg)\deg(\cf),
\]
where we assume that the orbifolds are oriented for the integer degree to be well-defined. 

  \end{proposition}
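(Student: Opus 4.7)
The plan is to fix a single well-chosen regular value $y\in\cS$ of $\cg\cf$ and reduce the multiplicativity to an explicit calculation, using the invariance of the degree from regular value (Theorems~\ref{thm:independence} and~\ref{thm:oriented}) to factor the resulting sum. Since both $\cf$ and $\cg$ are proper, so is $\cg\cf$, and by Sard together with the remark following Theorem 4.1 of~\cite{BB2012}, the sets of regular values of $\cg\cf$ and of $\cg$ are each open and dense in $\cS$. Pick $y$ in their intersection; the equality of the degrees evaluated at this particular $y$ will suffice, because the degrees are independent of the chosen regular value.

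The first technical step is to observe that, for such $y$, every $y_i\in g^{-1}(y)$ is automatically a regular value of $\cf$. Indeed, if $x\in f^{-1}(y_i)$ then $x\in(gf)^{-1}(y)$, so $T(\tilde{gf})_x = T\tilde g_{y_i}\circ T\tilde f_x$ is a surjective map between vector spaces of the same dimension. Hence both factors are isomorphisms, so $T\tilde f_x$ is surjective (and, incidentally, $T\tilde g_{y_i}$ is confirmed to be an isomorphism). If $y_i\notin f(O)$, regularity for $\cf$ holds vacuously.

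Next I carry out the count. Using the disjoint decomposition $(gf)^{-1}(y)=\bigsqcup_{y_i\in g^{-1}(y)} f^{-1}(y_i)$ together with the multiplicativity of signs of isomorphisms $\operatorname{sgn}(T(\tilde{gf})_x)=\operatorname{sgn}(T\tilde g_{y_i})\cdot\operatorname{sgn}(T\tilde f_x)$, one rewrites
\[
\deg(\cg\cf;y)=\sum_{y_i\in g^{-1}(y)}\operatorname{sgn}(T\tilde g_{y_i})\,\frac{|\Gamma_y|}{|\Gamma_{y_i}|}\sum_{x\in f^{-1}(y_i)}\operatorname{sgn}(T\tilde f_x)\,\frac{|\Gamma_{y_i}|}{|\Gamma_x|},
\]
after inserting the factor $|\Gamma_{y_i}|/|\Gamma_{y_i}|$. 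The inner sum is $\deg(\cf;y_i)$, which by Theorem~\ref{thm:oriented} equals $\deg(\cf)$ independently of $y_i$ (using $\cP$ connected and $\Sigma^\cP_{\dim\cP-1}=\emptyset$). Factoring $\deg(\cf)$ out leaves exactly $\deg(\cg;y)=\deg(\cg)$, yielding $\deg(\cg\cf)=\deg(\cg)\deg(\cf)$. The mod-$2$ case is identical, with signs dropped and the equality interpreted modulo $2$; Theorem~\ref{thm:independence} provides the analogous invariance of $\deg_2(\cf;y_i)$.

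The main obstacle is not the computation itself but making sure that the regularity hypotheses line up so that all three degrees in play are simultaneously defined at the chosen value $y$; this is why the argument selects $y$ in a common dense open set of regular values and leverages the chain rule to transfer regularity to the intermediate points $y_i\in g^{-1}(y)$. The need for the codimension-$1$ hypotheses enters only implicitly through the invocation of Theorems~\ref{thm:independence} and~\ref{thm:oriented}.
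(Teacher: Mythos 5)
Your proof is correct and follows essentially the same route as the paper: decompose $(gf)^{-1}(y)$ over the fibers of $g$, factor the signs via the chain rule and the isotropy ratios via $\frac{|\Gamma_y|}{|\Gamma_x|}=\frac{|\Gamma_y|}{|\Gamma_{y_i}|}\cdot\frac{|\Gamma_{y_i}|}{|\Gamma_x|}$, and use the independence of $\deg(\cf;y_i)$ from $y_i$ (Theorems~\ref{thm:independence} and~\ref{thm:oriented}) to pull out $\deg(\cf)$. Your choice of $y$ in the intersection of the (open, dense) sets of regular values of $\cg\cf$ and $\cg$ is a slightly more careful setup than the paper's bare assertion that regularity for $\cg\cf$ forces regularity for $\cg$, but the substance is identical.
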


\begin{proof}
We prove this for the integer degree, the case of the mod-$2$ degree is similar. Let $z$ be a regular value of the composition $\cg \cf$. Then $z$ must be a regular value of $\cg$, and all $y\in g^{-1}(z)$ must be regular values for $\cf$ and we get
\begin{align*}
\deg(\cg \cf) & =\sum_{x\in (g f)^{-1}(z)}\textrm{sgn}(T_{\tilde{x}}(\widetilde{g f})_x)\cdot\frac{\vert\Gamma_z\vert}{\vert\Gamma_x\vert}\\
& =\sum_{y\in g^{-1}(z)}\sum_{x\in f^{-1}(y)}\textrm{sgn} (T_{\tilde{y}}\tilde{g}_y\,T_{\tilde{x}}\tilde{f}_x)\cdot \frac{\vert\Gamma_z\vert}{\vert\Gamma_y\vert}\cdot\frac{\vert\Gamma_y\vert}{\vert\Gamma_x\vert}\\
                    & =\sum_{y\in g^{-1}(z)}\textrm{sgn}( T_{\tilde{y}}\tilde{g}_y)\cdot \frac{\vert\Gamma_z\vert}{\vert\Gamma_y\vert}\,\sum_{x\in f^{-1}(y)}\textrm{sgn}( T_{\tilde{x}}\tilde{f}_x)\cdot \frac{\vert\Gamma_y\vert}{\vert\Gamma_x\vert}\\
  & =\sum_{y\in g^{-1}(z)}\textrm{sgn}(T_{\tilde{y}}\tilde{g}_y)\cdot \frac{\vert\Gamma_z\vert}{\vert\Gamma_y\vert}\cdot\deg(\cf;y)\\
& =\deg(\cg) \deg(\cf).
\end{align*}
Notice that we have used the fact that the degree of $\cf$ is independent of the regular value $y$.
\end{proof}

\begin{theorem}
  Let $\cf,\cg:\cO\rightarrow \cP$ be two proper orbifold maps whose underlying maps $f,g:O\rightarrow P$ are the same. Assume that $\cP$ is connected and that $\Sigma^{\cO}_{\dim \cO-1}=\emptyset$. Then
  $$
\deg_2(\cf)=\deg_2(\cg)\qquad \text{and}\qquad \deg(\cf)=\deg(\cg),
$$
where we assume the orbifolds are oriented for the integer degree to be well-defined. 
\end{theorem}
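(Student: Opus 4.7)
The plan is to reduce to a tautological equality by computing both degrees at a regular value that also lies in the smooth stratum of $\cP$, where the orbifold structure locally disappears.

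First, since $\cf$ and $\cg$ are proper and smooth, the remark following Sard's theorem shows that the sets of their regular values are each open and dense in $\cP$. The smooth stratum $\Sigma^{\cP}_{\dim \cP}$ is also open and dense by Proposition~\ref{prop:structure}, so I can pick a point $y$ lying in the intersection of all three sets. By Proposition~\ref{prop:singularity}, every preimage $x \in f^{-1}(y) = g^{-1}(y)$ is then a smooth point of $\cO$, so the isotropy groups $\Gamma_x$ and $\Gamma_y$ are trivial.

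At such an $x$ the orbifold charts $\tilde{U}_x$ and $\tilde{U}_y$ carry trivial group actions, so the chart maps $\phi_x$ and $\phi_y$ are diffeomorphisms onto their images. Any orbifold lift must therefore equal $\phi_y^{-1}\circ f\bigr\vert_{U_x}\circ \phi_x$, a map depending only on the underlying map and the charts. Since $f=g$, this forces $\tilde{f}_x = \tilde{g}_x$, and in particular $\textrm{sgn}(T_{\tilde{x}}\tilde{f}_x)=\textrm{sgn}(T_{\tilde{x}}\tilde{g}_x)$. Comparing the defining formulas term by term gives
\[
\deg_2(\cf;y)=\deg_2(\cg;y) \quad\text{and}\quad \deg(\cf;y)=\deg(\cg;y),
\]
and Theorems~\ref{thm:independence} and~\ref{thm:oriented} upgrade these pointwise equalities to the global identities $\deg_2(\cf)=\deg_2(\cg)$ and $\deg(\cf)=\deg(\cg)$.

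The central insight is that, by choosing the evaluation point in $\Sigma^{\cP}_{\dim \cP}$, one localizes the argument to a setting where the orbifold map data is completely determined by the underlying continuous map, so that what might a priori be a delicate comparison of two distinct lifts (differing by an element of $\Gamma_{f(x)}$ in the non-smooth case) becomes a trivial identity. No sign subtlety or argument about $\Theta_x$ is needed; the only genuine input is Sard's theorem combined with the invariance theorems already established in Section~\ref{sec:degree}.
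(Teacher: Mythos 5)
Your proof is correct and follows essentially the same route as the paper's: evaluate both degrees at a common regular value and observe that every quantity in the degree formula is determined by the underlying map. You are somewhat more explicit than the paper in choosing the common regular value inside the smooth stratum so that (via Proposition~\ref{prop:singularity}) the preimages are smooth points and the lifts, hence the signs, are forced to coincide --- which is exactly the point the paper only gestures at with ``most easily seen at smooth points.''
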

\begin{proof}
 Since regular values of both $\cf$ and $\cg$ are open and dense in $\cP$, there is a common regular value. If we compute the mod-$2$ degree at this point, we see that the parameters involved are just the number of preimages and the order of the isotropy groups of these preimages, which are all independent of the chosen lifts. In the oriented degree case, the additional sign term appearing in the degree formula is also determined by the underlying map, which can be most easily seen at smooth points.
\end{proof}

\section{Quotient orbifolds}

\label{sec:quotient}
Orbifolds arise in a natural way if we take the quotient of a smooth manifold $M$ by the effective and locally free action of a compact Lie group $G$. Weighted projective spaces fall into this category, as quotients of a circle action on an odd dimensional sphere. For a quotient orbifold $\mathcal{O}=M//G$, the underlying space is just the topological quotient $M/G$ and orbifold charts can be constructed as follows: the Slice Theorem provides for each $x\in M$ a submanifold $S_x\cong \mathbb{R}^n$ of $M$ containing $x$ and which is invariant under the action of the stabilizer $G_x=\textrm{Stab}(x)$ (here $n=\dim M-\dim G$). Such a ``slice" is the image, under the exponential map associated to an auxiliary Riemannian metric, of a fibre of an $\epsilon$-tubular neighborhood of the orbit of $x$. An orbifold chart around $[x]$ is given by $(S_x,G_x, \phi_x:S_x\rightarrow S_x/G_x).$ 

We now show that equivariant mappings induce orbifold maps between the quotients. 

  \begin{proposition}
    \label{prop:slicemap}
Suppose $G_1$ and $G_2$ are compact Lie groups that act smoothly, effectively and locally freely on the smooth manifolds $M_1$ and $M_2$. Let $\Theta:G_1\rightarrow G_2$ be a Lie group homomorphism. Let $\hat{f}:\,M_1\rightarrow M_2$ be a $\Theta$-equivariant smooth map, i.e. $\hat f(\gamma  x)=\Theta(\gamma )\hat f(x)$ for all $\gamma \in G_1$ and $x\in M_1$. Then $\hat{f}$ induces a complete, smooth orbifold map $\cf:\, M_1//G_1\rightarrow M_2//G_2$.
\end{proposition}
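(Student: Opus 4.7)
The plan is to assemble the data $\cf = (f, \{\tilde f_x\}, \{\Theta_x\})$ in three stages. First I would define the underlying map $f: M_1/G_1 \to M_2/G_2$ by $f([x]) = [\hat f(x)]$; $\Theta$-equivariance of $\hat f$ makes this well defined and continuous. Second, for each $x \in M_1$ I would set $\Theta_x := \Theta\bigr\vert_{G_x}$; this takes values in $G_{\hat f(x)}$ since $\gamma x = x$ forces $\Theta(\gamma)\hat f(x) = \hat f(\gamma x) = \hat f(x)$.

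The substantive step is constructing the lift $\tilde f_x: S_x \to S_{\hat f(x)}$. I would apply the Slice Theorem at $\hat f(x)$ to obtain a $G_2$-equivariant diffeomorphism
\[
\Psi: G_2 \times_{G_{\hat f(x)}} S_{\hat f(x)} \longrightarrow V_{\hat f(x)} \subset M_2
\]
onto a tubular neighborhood of $G_2 \cdot \hat f(x)$, and then shrink $S_x$ (keeping it $G_x$-invariant) so that $\hat f(S_x) \subset V_{\hat f(x)}$. Each $\hat f(s)$ can then be written as $\Psi([h(s), \tilde s(s)])$ with $(h,\tilde s) \in G_2 \times S_{\hat f(x)}$ determined only up to the twist $(h,\tilde s) \sim (h\delta^{-1}, \delta \tilde s)$ for $\delta \in G_{\hat f(x)}$. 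To pin down a smooth choice I would average an inner product on $\mathfrak{g}_2$ to obtain an $\mathrm{Ad}(G_{\hat f(x)})$-invariant one, split $\mathfrak{g}_2 = \mathfrak{g}_{\hat f(x)} \oplus \mathfrak{m}$ orthogonally, and use $\exp(\mathfrak{m}_\epsilon)$ as a smooth local section of $G_2 \to G_2/G_{\hat f(x)}$ near the identity. Shrinking $S_x$ once more so that the first factor $h(s)$ lies in this section for every $s$, both $h: S_x \to G_2$ and $\tilde f_x(s) := \tilde s(s)$ are uniquely and smoothly determined by $\hat f\bigr\vert_{S_x}$.

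Verifying $\Theta_x$-equivariance is then the key check. For $\gamma \in G_x$, the $G_2$-equivariance of $\Psi$ and the equivalence relation with $\delta = \Theta(\gamma) \in G_{\hat f(x)}$ give
\[
\hat f(\gamma s) = \Theta(\gamma)\hat f(s) = \Psi\bigl([\Theta(\gamma)h(s), \tilde f_x(s)]\bigr) = \Psi\bigl([\Theta(\gamma)h(s)\Theta(\gamma)^{-1},\, \Theta(\gamma)\tilde f_x(s)]\bigr).
\]
Because $\mathfrak{m}$ is $\mathrm{Ad}(G_{\hat f(x)})$-invariant, the element $\Theta(\gamma) h(s)\Theta(\gamma)^{-1}$ still lies in $\exp(\mathfrak{m}_\epsilon)$, so uniqueness of the decomposition forces $\tilde f_x(\gamma s) = \Theta(\gamma)\tilde f_x(s)$. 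The identity $\phi_{\hat f(x)}\bigl([\tilde f_x(s)]\bigr) = f\bigl(\phi_x([s])\bigr)$ then follows from $[\Psi([h, \tilde s])] = [\tilde s]$ in $V_{\hat f(x)}/G_2 \cong S_{\hat f(x)}/G_{\hat f(x)}$, confirming that the $\tilde f_x$ genuinely lift $f$.

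The main obstacle I anticipate is precisely the need for the smooth equivariant section of $G_2 \to G_2/G_{\hat f(x)}$: without the Lie-theoretic choice via $\exp(\mathfrak{m})$, nothing in the raw slice decomposition guarantees that $h$ and $\tilde f_x$ can be chosen simultaneously smooth and compatible with the $G_x$-action. Once this section is in hand, the remaining verifications are essentially bookkeeping.
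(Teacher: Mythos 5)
Your construction is essentially the paper's: your element $h(s)^{-1}$ is exactly the correction factor $k(y)\in U$ that the paper uses to push $\hat f(y)$ into the slice $S_{\hat f(x)}$, and the equivariance check via conjugation by $\Theta(\gamma)$ is the same (your $\mathrm{Ad}(G_{\hat f(x)})$-invariant choice of identity neighborhood is a slightly cleaner way to guarantee $\Theta(\gamma)h(s)\Theta(\gamma)^{-1}$ stays in the section than the paper's ``for $y$ sufficiently close to $x$''). Note only that since the actions are locally free the stabilizers are finite, so $\mathfrak{g}_{\hat f(x)}=0$ and your splitting $\mathfrak{g}_2=\mathfrak{g}_{\hat f(x)}\oplus\mathfrak{m}$ degenerates to simply choosing a small conjugation-invariant neighborhood of the identity in $G_2$.
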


\begin{proof}
  The map $\hat{f}$ maps $G_1$-orbits to $G_2$-orbits and therefore the map $f([x])=[\hat{f}(x)]$ is a well-defined continuous map between the quotient $M_1/G_1$ and $M_2/G_2$. If $\gamma \in G_x$, then $\Theta(\gamma )\cdot \hat{f}(x)=\hat{f}(\gamma \cdot x)=\hat{f}(x)$, so $\Theta(\gamma )\in G_{\hat{f}(x)}$. Thus for every $x\in M_1$, the group homomorphism $\Theta$ restricts to a group homomorphism $\Theta_x:\,G_x\rightarrow G_{\hat{f}(x)}$. Let $(S_x, G_x, \phi_x)$ be an orbifold chart around $[x]$ and recall that $S_x$ is a $G_x$-invariant submanifold of $M_1$ such that  $x\in S_x$. Similarly, choose a chart $(S_{\hat{f}(x)},G_{\hat{f}(x)},\phi_{\hat{f}(x)})$ around $[\hat{f}(x)]$. Since $\hat f$ is neither immersive nor submersive at $x$, there is no apriori reason for $\hat f$ to map the slice $S_x$ to the slice $S_{\hat f(x)}$. We now construct a map $\tilde{f}_{[x]}$ that does map $S_x$ to $S_{\hat f(x)}$.
  
  Recall that the subgroup $G_{\hat f(x)}\subset G_2$ is finite. Therefore there exists a neighborhood $U\subset G_2$ of the identity such that $U\cap G_{f(x)}=\{e\}$, the trivial subgroup. By the definition of a slice, the set $V=\{\gamma z\,|\, \gamma \in U\quad z\in S_{\hat f(x)}\}$ is a neighborhood of $\hat f(x)$. Since $\hat f$ is continuous, we can shrink $S_x$ so that $\hat f(S_x)\subset V$. For each $y\in S_x$, the orbit through $\hat f(y)$ intersects the slice $S_{\hat f(x)}$ in a point and there exists a unique $k(y)\in U$ such that $k(y)\hat f(y)\in S_{\hat f(x)}$ (recall that $\Gamma_x\cap U$ is the identity, cf.~Figure~\ref{fig:slicemap}). Now let $\gamma \in G_x$ and $y\in S_x$. Then 
\[
k(y)\Theta_x(\gamma )^{-1}\hat f(\gamma y)=k(y)\hat f(\gamma ^{-1}\gamma y)=k(y)f(y)\in S_{\hat f(x)}.
\]
Since $\Theta_x(\gamma )\in G_{\hat f(x)}$, it follows that also $\Theta_x(\gamma )k(y)\Theta_x(\gamma )^{-1}\hat f(\gamma y)\in S_{\hat f(x)}$. Moreover, since $\Theta_x(\gamma )k(y)\Theta_x(\gamma )^{-1}\in U$ for $y$ sufficiently close to $x$, we must have that $k(\gamma y)=\Theta_x(\gamma )k(y)\Theta_x(\gamma )^{-1}$.

Define the map $\tilde f_{[x]}:S_x\rightarrow S_{\hat{f}(x)}$ via $\tilde f_{[x]}(y)=k(y)\hat f(y)$. We need to check that this map is $\Theta_x$-equivariant: let $\gamma \in G_x$ and $y\in S_x$, then by the transformation rule for $k$ and the equivariance of $\hat f$ we have
\begin{align*}
  \tilde f_{[x]}(\gamma  y)&=k(\gamma y)\hat f(\gamma  y)\\
               &=\Theta_x(\gamma )\Theta_x(\gamma )^{-1}k(\gamma y)\Theta_x(\gamma ) \hat f(y)\\
  &=\Theta_x(\gamma ) k(y)\hat f(y)=\Theta_x(\gamma )\tilde f_{[x]}(y).
\end{align*}
Hence we have constructed a $\Theta_x$-equivariant lift $\tilde f_{[x]}$ of $f$ in the chart $(S_x, G_x, \phi_x)$ around each point $[x]\in M_1//G_1$, and thus defined a smooth orbifold map $\cf$. 
\end{proof}
  
    \begin{figure}
\def\svgwidth{.8\textwidth}
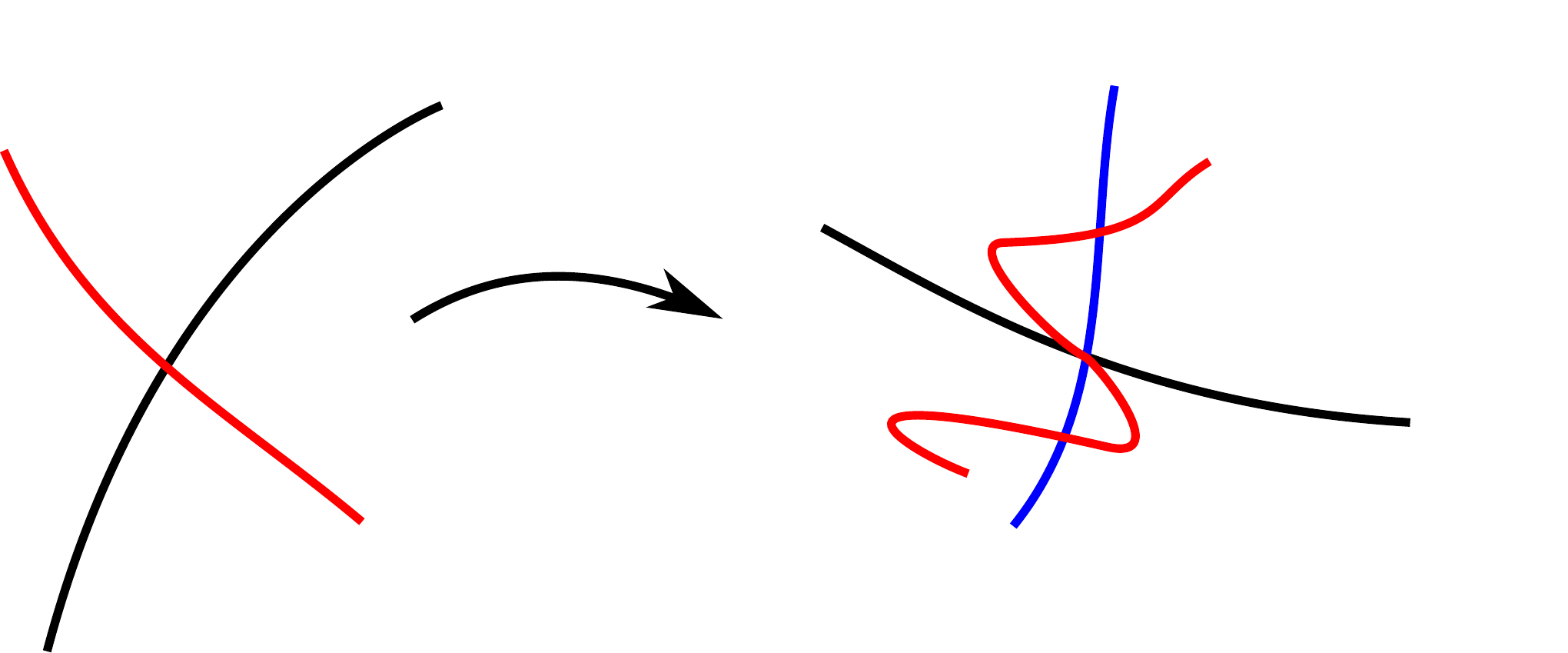
\caption{This figure illustrates the construction in Proposition~\ref{prop:slicemap}. A complete orbifold map $\cf$ is constructed from an equivariant map $\hat f$. The red slice $S_x$ through $x$ is not mapped to the blue slice $S_x$. However, for every element $y\in S_x$ there exists a unique element $k(y)\in G_2$, close to the identity, such that $k(y)\in S_{\hat f(x)}$.}
  \label{fig:slicemap}
\end{figure}

\begin{remark}
  \label{remark:proper}
With a small additional effort, the result can also be proved if the groups $G_1,G_2$ are not necessarily compact. It is crucial to demand that the actions are proper, i.e. the maps $\Phi_i:G_i\times M_i\rightarrow G_i\times M_i$ defined by $\Phi_i(\gamma ,x)=(\gamma x,x)$ are proper, so that slices exist. 
     \end{remark}

     \begin{remark}
The following result was already proved by Satake in \cite{S1956}: if $\mathcal{O}$ is an effective orbifold, then it can be presented as $M//G$, where $M=\textrm{Fr}_{O(n)}\mathcal{O}$ denotes the orthonormal frame bundle of some Riemannian metric and $G=O(n)$.
The orthonormal frame bundle $\textrm{Fr}_{O(n)}\mathcal{O}$ is an orbifold fibre bundle over $\mathcal{O}$. It is constructed as follows (cf. \cite{adem2007}): given an orbifold chart $\{(\tilde{U},\Gamma,\phi)\}$, one considers the manifold $\textrm{Fr}_{O(n)}(\tilde{U})$, consisting of pairs $(\tilde{x},F)$, where $\tilde{x}$ is a point in $\tilde{U}$ and $F$ is an orthonormal frame at $\tilde{x}$. The group $\Gamma$ acts on these pairs by $\gamma\cdot (\tilde{x},F)=(\gamma\cdot\tilde{x}, T \gamma_{\tilde{x}}F).$ This action of $\Gamma$ on frames is free, so the local model, the quotient $\textrm{Fr}_{O(n)}(\tilde{U})/\Gamma$, is smooth. 
         The space $\textrm{Fr}_{O(n)}(\mathcal{O})$ is obtained by gluing together these local charts using suitable transition functions. The action of the orthogonal group on $\textrm{Fr}_{O(n)}(\tilde{U})$ induces a $O(n)$-action given, in local charts, by $[(\tilde{x},F)]\cdot A=[(\tilde{x},FA)]$, which is compatible with the gluing and therefore extends to a global action on $\textrm{Fr}_{O(n)}(\mathcal{O})$. The (orbifold) quotient  $\textrm{Fr}_{O(n)}(\mathcal{O})//O(n)$ is isomorphic to the original orbifold $\mathcal{O}$.
         
         Proposition~\ref{prop:slicemap} in particular shows that equivariant maps of frame bundles induce complete smooth orbifold maps. However the converse does not immediately follow. The problem is that the lifted tangent map $T_{\tilde x}\tilde f_x$ does not define a map of frames, unless $T_{\tilde x}\tilde f_x$ is invertible. We are interested in the question if all complete orbifold mappings can be canonically lifted to an equivariant map.
\end{remark}

\section{Examples}

\label{sec:examples}

\begin{figure}
\def\svgwidth{.3\textwidth}
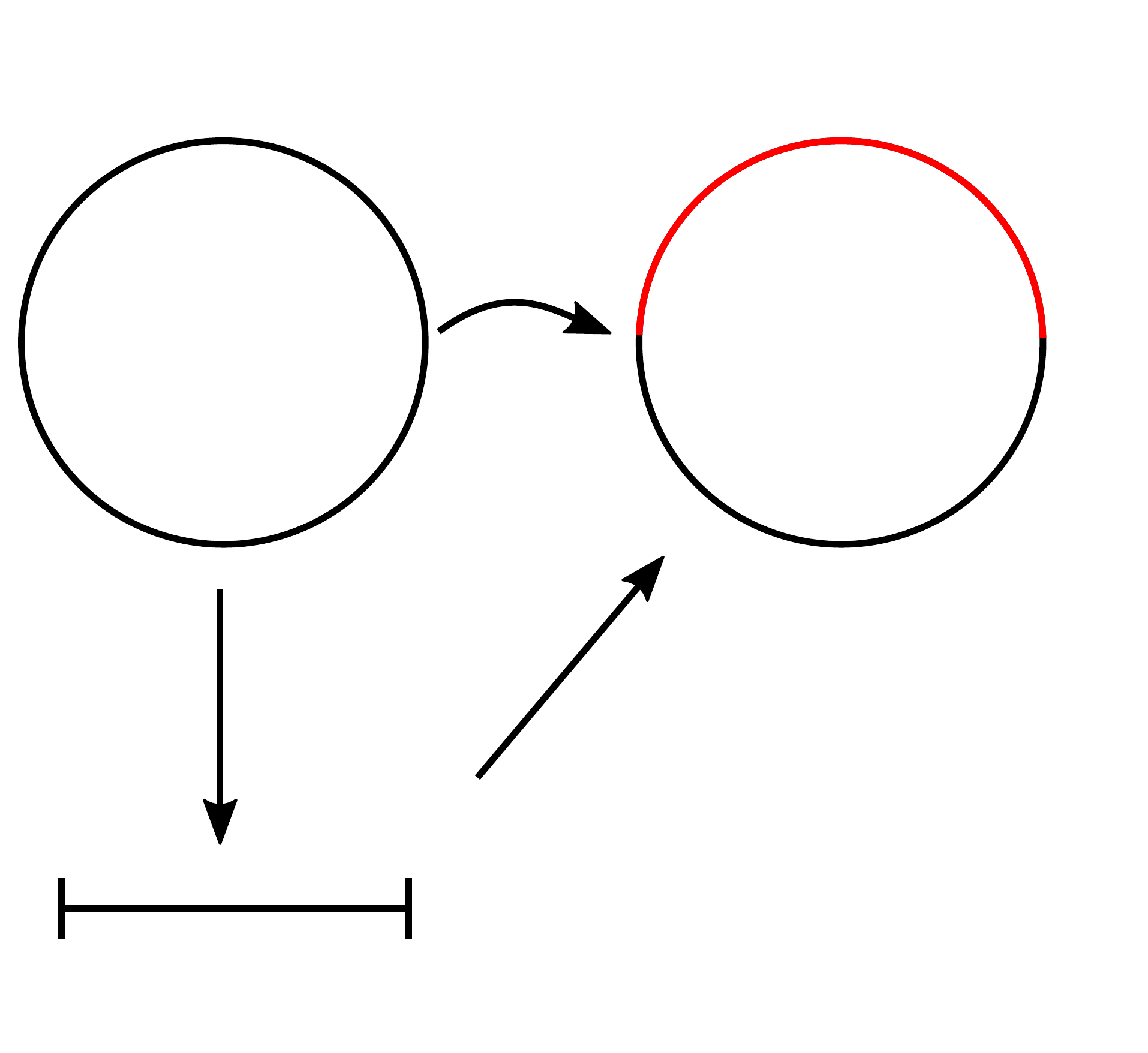
\caption{In Example~\ref{ex:circle1} we discuss an orbifold map whose mod-$2$ degree does depend on the chosen value. The image of $\hat{f}$, which equals the image of the underlying map $f$ of $\cf$, is the top half of the circle $S^1$. It is shown in red in the Figure. We have that $\deg_2(\hat f;y)=0$ is independent of the regular value $y$, while the degree of $\cf$ depends on the value chosen, namely: $\deg_2(\cf;(0,1))=1$ and $\deg_2(\cf;(0,-1))=0$. } 
  \label{fig:circle1}
\end{figure}

\begin{example}[An orbifold map whose degree depends on the choice of regular value]
\label{ex:circle1}
  The mod-$2$ degree of a proper map $\cf:\cO\rightarrow \cP$ is independent of the regular value $y\in P$ if $\cP$ is connected and $\cO$ does not have a codimension $1$ singularity. A hypothesis like this is necessary as the following example illustrates:

Let $S^1\subset \mR^2$ be the standard circle. Then $\mZ_2$ acts on it by reflection in the second coordinate. The underlying space of the orbifold $S^1//\mZ_2$ is an orbifold is a closed interval, whose boundary points have $\mZ_2$ isotropy. The map $\hat f:S^1\rightarrow S^1$ given by $\hat f(x,y)=\frac{1}{\sqrt{x^2+y^4}}(x,y^2)$ satisfies $\hat f(x,y)=\hat f(x,-y)$, hence is equivariant if the domain is equipped with the action of $\mZ_2$ above and the codomain has a trivial group acting. By Proposition~\ref{prop:slicemap} the map $\hat f$ descends to a complete orbifold map $\cf:S^1//\mZ_2\rightarrow S^1$ (here $\Theta$ is the trivial homomorphism). Both $(0,-1)$ and $(0,1)$ are regular values for this map. One easily checks that $f^{-1}((0,1))$ contains one preimage that is a smooth point. Hence $\deg_2(\cf;(0,1))=1$. But $\deg_2(\cf;(0,-1))=0$ as this does not lie in the image of $\cf$. This map is depicted in Figure~\ref{fig:circle1}.
\end{example}

  \begin{example}[A contractible map with non-zero degree at all regular values]
  \label{ex:circle2}
    Consider once more the action of $\mZ_2$ on $S^1$ by reflection in the second coordinate, and the following two maps from $S^1$ to itself:
\[
\hat f(x,y)=\frac{(x,e^{-1/y^2})}{x^2+e^{-2/y^2}},\ \textrm{and}\ \hat g(x,y)=\frac{(x,\textrm{sign}(y)e^{-1/y^2})}{x^2+e^{-2/y^2}} \ \textrm{if}\ y
\neq 0,\quad \hat{f}(x,0)=\hat{g}(x,0)=(x,0).
\]
Let $\Theta^f:\mZ_2\rightarrow \mZ_2$ be the trivial homomorphism and $\Theta^g:\mZ_2\rightarrow \mZ_2$ be the identity. Then $\hat f$ and $\hat g$ are $\Theta^f$ and $\Theta^g$ equivariant, respectively, and by Proposition~\ref{prop:slicemap} they induce orbifold maps $\cf,\cg: S^1//\mZ_2\rightarrow S^1//\mZ_2$. The underlying maps $f$ and $g$ are equal. All smooth points are regular and $\deg_2(\cf;z)=\deg_2(\cg;z)$ for all smooth points. Notice that $\cg$ is not contractible, while $\cf$ is. In fact, $\cg$ it is homotopic to the identity. 
\end{example}
\begin{figure}
\def\svgwidth{.4\textwidth}
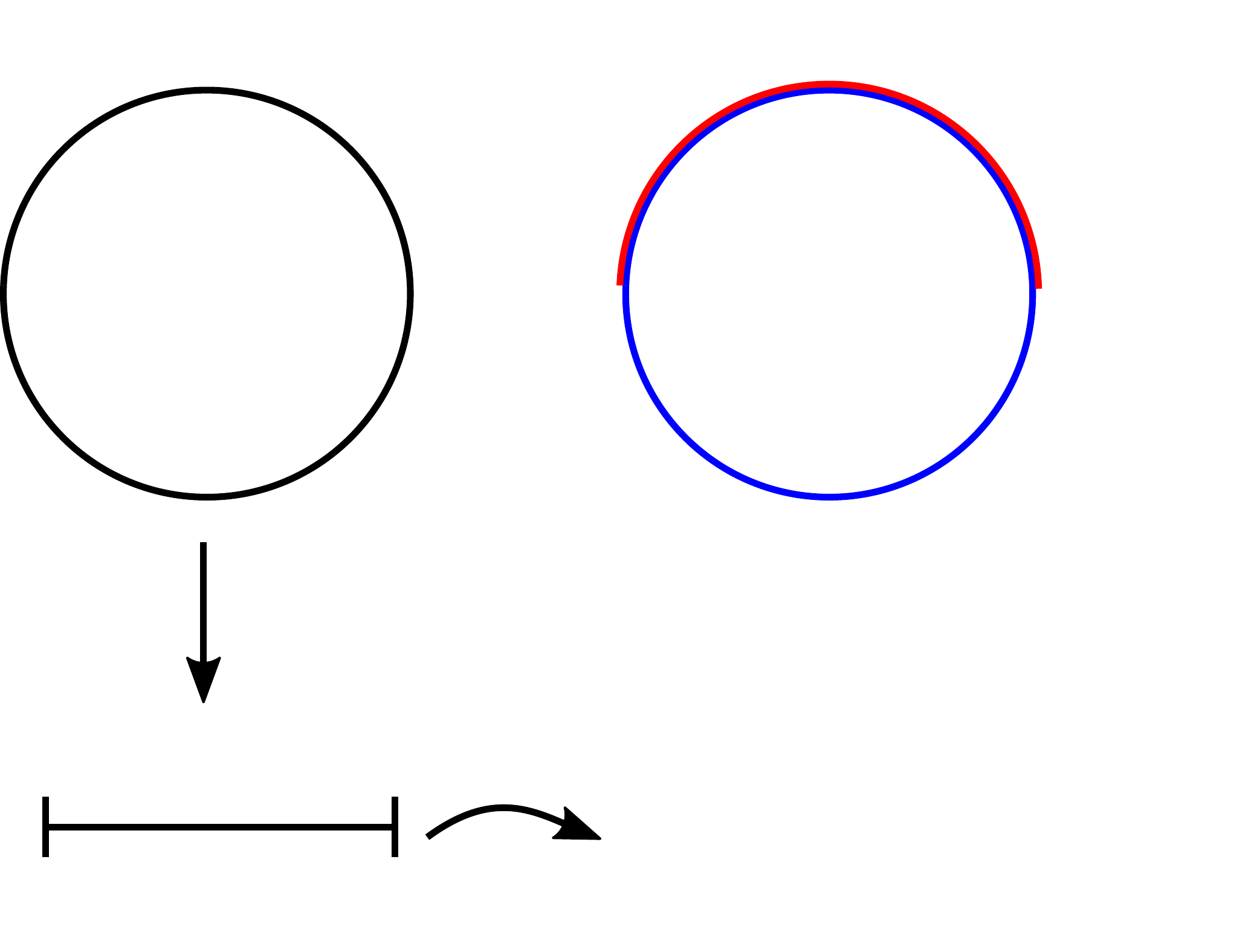
\caption{This figure depicts the maps constructed in Example~\ref{ex:circle2}. The maps $\hat f$ and $\hat g$ are $\Theta^f$ and $\Theta^g$ equivariant, respectively, and descend to orbifold maps $\cf$ and $\cg$. The underlying maps $f$ and $g$ are equal and $\deg_2(\cf;y)=\deg_2(\cg;y)$ for all regular values, that is, all the smooth points of $S^1//\mZ_2$. The orbifold map $\cf$ is contractible while $\cg$ is not. } 
  \label{fig:circle2}
\end{figure}

\subsection{Degrees of maps between weighted projective spaces}

Let $q=(q_0,\ldots,q_n)$ be an ($n+1$)-tuple of positive integers. The group $\mC^*=\mC\setminus\{0\}$ acts on $\mC^{n+1}\setminus\{0\}$ as follows: $\gamma\cdot(z_0,\ldots,z_n)=(\gamma^{q_0}z_0,\ldots, \gamma^{q_n}z_n).$ The weighted projective space $\mC \mP^n(q)$ is the quotient of $\mC^{n+1}\setminus\{0\}$ by this action. We denote the equivalence class of $(z_0,\ldots,z_n)\in \mC^{n+1}\setminus\{0\}$ by $[z_0:\ldots:z_n]_q$. If $q_0=q_1=\ldots=q_n=1$, then we recover the ordinary projective space. The action is proper and we will assume that the weights $q_i$ are coprime, so that the action is effective. The weighted projective space $\mC\mP^n(q)$ has then the structure of an effective orbifold. The maps $\hat f_q:\mC^{n+1}\setminus\{0\}\rightarrow \mC^{n+1}\setminus\{0\}$ and $\hat g_q:\mC^{n+1}\setminus\{0\}\rightarrow \mC^{n+1}\setminus\{0\}$ given by
\[
\hat f_q(z_0,\ldots,z_n)=(z_0^{q_0},\ldots, z_n^{q_n})\quad \textrm{and}\quad \hat g_q(z_0,\ldots,z_n)=(z_0^{\frac{\textrm{lcm}(q)}{q_0}},\ldots, z_n^{\frac{\textrm{lcm}(q)}{q_n}})
\]
are $\id_{\mC^*}$ equivariant, and so they induce orbifold maps $\cf_q: \mC\mP^n\rightarrow \mC\mP^n(q)$ and $\cg_q:\mC\mP^n(q)\rightarrow \mC\mP^n$. The underlying maps satisfy
\[
f_q([z_0:\ldots:z_n])=[z_0^{q_0}:\ldots: z_n^{q_n}]_q\quad \textrm{and}\quad g_q([z_0:\ldots:z_n]_q)=[z_0^{\frac{\textrm{lcm}(q)}{q_0}}:\ldots: z_n^{\frac{\textrm{lcm}(q)}{q_n}}].
\]
We invoke~Remark~\ref{remark:proper} to construct these maps, but notice that is also possible to normalise the coordinates and define $\hat f_q$ and $\hat g_r$ as maps from $S^{2n+1}$ onto $S^{2n+1}$, with the compact Lie group $S^1$ acting on $S^{2n+1}$ via the same formula, and apply Proposition~\ref{prop:slicemap} directly.
Here $\textrm{lcm}(q)$ denotes the least common multiple of the $q_i$'s, so in particular, when the $q_i$'s are pairwise coprime, $\textrm{lcm}(q)=q_0\cdot\ldots\cdot q_n$. The map $\cf_q$ has degree $q_0\cdot \ldots \cdot q_n$: to see this, consider the orbifold chart $(\tilde U_x,\Gamma_x,\phi_x)$ around 
  $x=[1:0:\ldots :0]_q$, where 
\[
U_x=\{[z_0:\dots :z_n]_q\in \mC\mP^n(q)\,:\, z_0\neq 0\}
\]
and $\Gamma_x=\mZ_{q_0}$ acts on $\tilde U_x=\mC^n$ by $\xi\cdot (w_1,\ldots,w_n)=(\xi^{q_1}w_1,\ldots,\xi^{q_n}w_n)$. The map $\phi_x: \tilde U_x/\Gamma_x\rightarrow U_x$ is defined by $\phi_x([w_1,\ldots,w_n])=[1:w_1:\ldots :w_n]_q$. The point $[1:1:\ldots :1]\in \mC\mP^n$ is mapped by $\cf_q$ to the point $[1:1:\ldots :1]_q$. Notice that this is a smooth point as all the $q_i$'s are coprime. By now choosing the standard homogeneous chart defined by $z_0\neq0$ as a (manifold) chart around $[1:1:\ldots :1]$ in $\mathbb{CP}^n$, and $\tilde U_{x}$ as a chart around $[1:1:\ldots :1]_q$, we see that $\cf_q$ lifts, in these charts, to the map $\left(\tilde{f}_q\right)_x(w_1,\ldots,w_n)=(w_1^{q_1},\ldots,w_n^{q_n})$. Hence $[1:1:\ldots :1]_q$ is a regular value and $\mathrm{sgn}\left(d_{\tilde x}\left(\tilde f_q\right)_x\right)=1$. Moreover, under $\cf_q$ it has exactly $q_0\cdot \ldots\cdot q_n$ preimages (namely all the elements of $\mC\mP^n$ of the form $[\xi_0:\ldots:\xi_n]$ with $\xi_i$ a $q_i$-th root of unity). Thus
\[
\deg(\cf_q)=q_0\cdot \ldots \cdot q_n.
\]

The claim about the degree of the second map, $\cg_q$, can be verified as follows. The composition $\cg_q\circ \cf_q$ maps $[z_0:\ldots : z_n]$  to $[z_0^{\textrm{lcm}(q)}:\ldots :z_n^{\textrm{lcm}(q)}]$ and it is a degree $\textrm{lcm}(q)^n$ self-map of $\mC\mP^n$. In view of the multiplicativity of the degree (Proposition~\ref{prop:multiplicative}), we can conclude that
\[
  \deg(\cg_q)=\frac{\textrm{lcm}(q)^n}{q_0\cdot\ldots\cdot q_n}
\]

We can also introduce a second weighted projective space $\mC \mP^n(r)$, with different weights, and consider the composition $\ch_{rq}=\cf_r\circ\cg_q:\,\mC\mP^n(q)\rightarrow \mC\mP^n(r)$, whose underlying map satisfies
\[
  h_{rq}([z_0:\ldots:z_n]_q)=[z_0^{\frac{\textrm{lcm}(q)}{q_0}\cdot r_0}:\ldots: z_n^{\frac{\textrm{lcm}(q)}{q_n}\cdot r_n}]_r.
\]
Again by the multiplicativity of the degree, Proposition~\ref{prop:multiplicative}, we see that
  \[
    \deg(\ch_{rq})=\frac{\textrm{lcm}(q)^n}{q_0\cdot\ldots\cdot q_n}\cdot r_0\cdot \ldots\cdot  r_n.
  \]

We would like to use this example to underline the fact that regular values of an orbifold map are not necessarily smooth points, and conversely. 
For instance, let $q=(1,\dots,1,k)$ and consider the map $\cf_q$ defined above. It lifts to the identity in a neighbourhood of $[0:\ldots :0:1]_q$. 
The point $[0:\dots :0:1]_q\in \mC \mP^n(q)$ is thus a non-smooth point (it has nontrivial isotropy, namely $\mZ_k$), but it is a regular value of the mapping $\cf_q$.
Its preimage consists of only one point, namely $[0:\ldots :0:1]\in \mC\mP^n$, and by the weighted count in our definition it follows that the degree of $\cf_q$ at this point is $k$. 
On the other hand, the point $[1:0:\ldots :0]_q$ is a smooth point of $\mC \mP^n(q)$, but it is not a regular value of $\cf_q$, since in standard orbifold charts it lifts to $(w_1,\dots,w_n)\mapsto (w_1,\ldots, w_n^k)$ in a neighbourhood of the origin of $\mC^n$.

\begin{example}[The degree of an orbifold covering.]

  Let $G$ be a finite group acting effectively by orientation preserving diffeomorphisms on a connected and oriented manifold $M$. Then we have the oriented orbifold $\cO:=M//G$. The identity $\hat p:M\rightarrow M$ is an equivariant map, where on the domain we consider the trivial $G$-action on $M$, while on the codomain we let $G$ act in the prescribed way. 
This induces the orbifold covering map $\mathcal{p}:M\rightarrow \cO$. All values of $\mathcal{p}$ are regular, since the lifts to the orbifold charts are all identity maps. This also implies that the signs that we encounter in the formula below are all equal to one. For each $y\in P$, there are $\frac{|G|}{|\Gamma_y|}$ preimages of $y$ under $\mathcal{p}$, and the degree of the orbifold covering map is
  $$
  \deg(\mathcal p;y)=\sum_{x\in \mathcal{p}^{-1}(y)} \textrm{sgn}(T_{\tilde{x}}\hat p_x)\cdot \frac{\vert\Gamma_y\vert}{\vert\Gamma_x\vert}=\sum_{x\in \mathcal{p}^{-1}(y)}\vert\Gamma_y\vert=\frac{\vert G\vert}{\vert \Gamma_y\vert}\cdot\vert \Gamma_y\vert
  =|G|.$$
  Now consider a pair of finite groups $G_1,G_2$ acting by orientation-preserving diffeomorphisms on connected and oriented manifolds $M_1$ and $M_2$. Let $\Theta:G_1\rightarrow G_2$ be a homomorphism and $\hat f:M_1\rightarrow M_2$ a proper and $\Theta$-equivariant map. Then we obtain the commutative diagram
  $$
  \xymatrix{M_1\ar[r]^{\hat{f}}\ar[d]_{\mathcal{p_1}}&M_2\ar[d]^{\mathcal{p_2}}\\
    \cO_1 \ar[r]_{\cf}& \cO_2}
  $$
  which together with Proposition~\ref{prop:multiplicative} (multiplicativity of the degree) and our previous computation of                     $\deg(p_i)=\vert G_i\vert$, implies that the degrees of $\hat f$ and $\cf$ are related by
  $$
  \deg(\cf)=\deg(\hat f)\cdot\frac{\vert G_2\vert}{\vert G_1\vert}.
  $$
\end{example}
  
\bibliographystyle{abbrv} 
 \bibliography{orbifolds}

\begin{thebibliography}{1}

\bibitem{adem2007}
A.~Adem, J.~Leida, and Y.~Ruan.
\newblock {\em Orbifolds and Stringy Topology}.
\newblock Cambridge Tracts in Mathematics. Cambridge University Press, 2007.

\bibitem{BB2008}
J.~E. Borzellino and V.~Brunsden.
\newblock A manifold structure for the group of orbifold diffeomorphisms of a
  smooth orbifold.
\newblock {\em J. Lie Theory}, 18(4):979--1007, 2008.

\bibitem{BB2012}
J.~E. Borzellino and V.~Brunsden.
\newblock Elementary orbifold differential topology.
\newblock {\em Topology and its Applications}, 159(17):3583 -- 3589, 2012.

\bibitem{BB2015}
J.~E. Borzellino and V.~Brunsden.
\newblock On the notions of suborbifold and orbifold embedding.
\newblock {\em Algebr. Geom. Topol.}, 15(5):2787--2801, 2015.

\bibitem{DThesis}
G.~Dragomir.
\newblock {\em Closed geodesics on compact developable orbifolds}.
\newblock PhD thesis, McMaster University, 2011.

\bibitem{D2015}
G.~Dragomir.
\newblock The stratification of singular locus and closed geodesics on
  orbifolds.
\newblock 2015.

\bibitem{milnor1997}
J.~Milnor and D.~Weaver.
\newblock {\em Topology from the Differentiable Viewpoint}.
\newblock Princeton landmarks in mathematics and physics. Princeton University
  Press, 1997.

\bibitem{rot}
T.~O. Rot.
\newblock {Homotopy classes of proper maps out of vector bundles}.
\newblock {\em arXiv.org}, 2018.

\bibitem{S1956}
I.~{Satake}.
\newblock {On a Generalization of the Notion of Manifold}.
\newblock {\em Proceedings of the National Academy of Science}, 42:359--363,
  1956.

\end{thebibliography}

\end{document}